\newcommand{\CC}{\overline{\mathbb{C}}}
\newcommand{\Om}[1]{\Omega^{#1}}
\newcommand{\Omc}[1]{\Omega^{#1}_c}
\newcommand{\Z}[1]{Z^{#1}}
\newcommand{\OZ}[1]{\overline{\!Z}^{\,#1}}
\newcommand{\Zc}[1]{Z^{#1}_c}
\newcommand{\B}[1]{B^{#1}}
\newcommand{\Bc}[1]{B^{#1}_c}
\newcommand{\Hc}[1]{H^{#1}_c}
\newcommand{\HR}[1]{H^{#1}}
\newcommand{\OH}[1]{\overline{\!H}^{\,#1}}
\newcommand{\ov}[1]{\overline{#1}}
\newcommand{\Ker}{{\rm Ker}}
\newcommand{\Ima}{{\rm Im}}
\newtheorem{theorem}{Theorem}[section]
\newtheorem{lem}{Lemma}[section]
\newtheorem{prop}{Proposition}[section]
\newtheorem{defi}{Definition}[section]
\newtheorem{cor}{Corollary}[section]
\begin{document}
\begin{abstract}
We will show that any open Riemann surface $M$ of finite genus is biholomorphic to an open set of a compact Riemann surface. Moreover, we will introduce a quotient space of forms in $M$ that determines if $M$ has finite genus and also the minimal genus where $M$ can be holomorphically embedded.
\end{abstract}
\title{A differential form approach to the genus of Open Riemann surfaces}
\author{Franco Vargas Pallete,\, Jes\'{u}s Zapata Samanez}
\email{franco@math.ias.edu}
\email{jesus.zapata@pucp.pe}
\address{School of Mathematics  \\
 Institute for Advanced Study \\
114 MOS, 1 Einstein Drive \\
Princeton, NJ 08540\\
U.S.A.}
\address{Secci\'{o}n de Matem\'{a}ticas\\ Departamento de Ciencias\\ Pontificia Universidad Cat\'olica del Per\'u\\ Lima, Per\'u}
\thanks{Vargas Pallete was partially funded by the Proyecto de Apoyo a la Iniciaci\'{o}n Cient\'{i}fica (PAIN) from the Pontificia Universidad Cat\'{o}lica del Per\'{u} (PUCP) and by the Scholarship for the IMCA Project of the Brazil-Per\'{u} Covenant, and is also supported by the Minerva Research Foundation. Zapata Samanez is supported by the Departamento Acad\'emico de Ciencias of the Pontificia Universidad Cat\'olica del Per\'u (PUCP)}
\maketitle

\section{Introduction}

Even when de Rham cohomology of an open surface is infinite in dimension $1$ (and hence a bit complicated), there is a sufficient and necessary condition in the language of differential $1$-forms for the problem of embedding the surface into the Riemann sphere (Koebe's Generalized Uniformization Theorem). In this article we generalize this sufficient and necessary condition for surfaces with non-zero genus in terms of the dimension of certain quotient space of $1$-forms. This is also in general a necessary condition for the problem of embedding a open $n$-manifold into a compact $n$-manifold.

The article is organized as follows. In Section \ref{sec:background} we set some classical notation and facts about differential forms, and we also define a quotient space using either closed differential forms that are either exact outside a compact set or with compact support. We observe that these spaces are canonically isomorphic. In Section \ref{sec:ck} we study $c_k$, the dimension of our quotient space of $k$-forms. We show that $c_k$ is a topological invariant, is non-decreasing under inclusion and additive under connected sum (except for $k=0,n$ where $n$ is the dimension of the total space). In Section \ref{sec:genus}  we study the relationship between $c_1(X)$ and the genus of a surface $X$, namely that $c_1(X)$ is twice the genus. We also prove that for any given Riemann surface structure on $X$ there is a holomorphic embedding into some compact Riemann surface of the same genus. We finalize with some comments and applications.

\textbf{Acknowledgements} The authors would like to thank the Institute for Advanced Study for hosting them while completing this article. 
\section{Background}\label{sec:background}

Given a differentiable manifold $M$ let us define the many space of $k$-forms that will appear through the article. Denote by $\Om{k}(M)$ the space of real-valued skew-symmetric differential $k$-forms of $M$, where we define the exterior derivative operator $d: \Om{k}(M) \rightarrow \Om{k+1}(M)$ that satisfies $d\circ d =0$. Hence we can define the spaces of closed and exact $k$-forms $\Z{k}(M) = \Ker(\Om{k}(M) \overset{d}{\rightarrow} \Om{k+1}(M)) ,\,\B{k}(M) = \Ima(\Om{k-1}(M) \overset{d}{\rightarrow} \Om{k}(M))$ where $\B{k}(M) \subseteq \Z{k}(M)$.

\begin{defi}
Denote by $\OZ{k}(M)$ the space of $k$-closed form of $M$ such that are exact outside a compact set. That is
\[\OZ{k}(M) = \lbrace \lambda\in \Z{k}(M) \,|\, \exists K \subseteq M \text{ compact},\, \nu\in \Om{k-1}(M\setminus K),\, d\nu=\lambda|_{M\setminus K} \rbrace\]
\end{defi}

Note that we have the inclusion $\B{k}(M) \subseteq \OZ{k}(M)$, so we can define as well

\begin{defi}\label{def:H1}
$\OH{k}(M) = \OZ{k}(M)/\B{k}(M)$, $c_k(M) = {\rm dim}_{\mathbb{R}}\,\OH{k}(M)$
\end{defi}

We can obtain a quotient real vector space canonically isomorphic to $\OH{k}(M)$ by taking differentials with compact support. Let us denote then by $\Omc{k}(M) \subset \Om{k}(M)$ the space of differential $k$-forms of $M$ with compact support. We have the following commutative diagram

\begin{equation}\label{diag:dcommutes}
\begin{tikzcd}
\Omc{k}(M) \arrow[r, hook, "i"] \arrow[d, "d"]
& \Om{k}(M) \arrow[d,"d"] \\
\Omc{k+1}(M) \arrow[r, hook, "i"]
& \Om{k+1}(M)
\end{tikzcd}
\end{equation}
where $d$ is again the exterior derivative and the horizontal maps are the inclusions $\Omc{k}(M) \subset \Om{k}(M) $. In case that $M$ is compact these inclusions are identities. We can then define the closed forms with compact support $\Zc{k}(M)= \Ker(\Omc{k}(M) \overset{d}{\rightarrow} \Omc{k+1}(M))$ and the exact forms with compact support $ \Bc{k}(M) = \Ima(\Omc{k-1}(M) \overset{d}{\rightarrow} \Omc{k}(M))$. If we were going to take the quotient $\Zc{k}(M)/\Bc{k}(M)$ we will rescue the de Rham cohomology with compact support. Instead, we will look at the space of forms with compact support that are exact.

\begin{defi}\label{def:Bck}
Denote by $\ov{\Bc{k}}(M)$ the space of forms with compact support that are the exterior derivative of a form that does not need to have compact support. That is, $\ov{\Bc{k}}(M) = i^{-1}(\B{k}(M)) = \B{k}(M) \cap \Omc{k}(M)$
\end{defi}

Given the commutativity of Diagram \ref{diag:dcommutes} and that $d\circ d \equiv 0$, we have the following inclusions

\begin{equation}
\begin{tikzcd}
\Bc{k}(M) \arrow[r,hook]\arrow[d,hook]
& \B{k}(M) \arrow[dr,hook] \\
\ov{\Bc{k}}(M)  \arrow[r,hook]
&\Zc{k}(M) \arrow [r,hook]
&\Z{k}(M) \\
\end{tikzcd}
\end{equation}
which in particular allows us to define the quotient $\Zc{k}(M)/\ov{\Bc{k}}(M)$. Let us see that this quotient is canonically isomorphic to the quotient in Definition \ref{def:H1} via the inclusion map $\Zc{k}(M) \hookrightarrow \OZ{k}(M)$.

\begin{lem}\label{lem:incisom}
The inclusion map $\Zc{k}(M) \hookrightarrow \OZ{k}(M)$ induces an isomorphism $\Zc{k}(M)/\ov{\Bc{k}}(M) \overset{\cong}\rightarrow \OZ{k}(M)/\B{k}(M)$.
\end{lem}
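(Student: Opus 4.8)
The plan is to observe that the map in question is well-defined and injective almost by definition, so that the entire content of the lemma is the surjectivity statement, which I would establish by a single cutoff-function argument. First I would record why the inclusion makes sense: a form $\lambda \in \Zc{k}(M)$ is closed and vanishes identically off its (compact) support, hence is trivially exact there, so $\Zc{k}(M) \subseteq \OZ{k}(M)$, and since $\ov{\Bc{k}}(M) = \B{k}(M)\cap\Omc{k}(M)\subseteq \B{k}(M)$ the assignment $[\lambda]_{\ov{\Bc{k}}}\mapsto[\lambda]_{\B{k}}$ descends to the quotients. For injectivity, suppose $\lambda\in\Zc{k}(M)$ maps to $0$, i.e. $\lambda\in\B{k}(M)$. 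As $\lambda$ is also compactly supported, $\lambda\in\B{k}(M)\cap\Omc{k}(M)=\ov{\Bc{k}}(M)$ by Definition \ref{def:Bck}, so $[\lambda]_{\ov{\Bc{k}}}=0$. Thus injectivity is automatic from the very definition of $\ov{\Bc{k}}(M)$.

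The real work is surjectivity. Given a class represented by $\omega\in\OZ{k}(M)$, there is a compact $K\subseteq M$ and a form $\nu\in\Om{k-1}(M\setminus K)$ with $d\nu=\omega|_{M\setminus K}$. The idea is to truncate $\nu$ to produce a global primitive that agrees with $\nu$ near infinity. Concretely, I would choose a smooth cutoff $\rho\colon M\to[0,1]$ that vanishes on an open neighborhood of $K$ and equals $1$ outside a larger compact set $K'\supseteq K$. Because $\rho$ is supported away from $K$, the form $\rho\nu$ extends smoothly by zero across $K$ to a \emph{global} form $\rho\nu\in\Om{k-1}(M)$, so that $d(\rho\nu)\in\B{k}(M)$. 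I then set $\lambda=\omega-d(\rho\nu)$, which manifestly satisfies $\lambda-\omega\in\B{k}(M)$, hence $[\lambda]_{\B{k}}=[\omega]_{\B{k}}$.

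It remains to check that $\lambda$ genuinely lies in $\Zc{k}(M)$. It is closed, being a difference of the closed form $\omega$ and the exact form $d(\rho\nu)$. For compact support, observe that on $M\setminus K'$ we have $\rho\equiv 1$, so there $d(\rho\nu)=d\nu=\omega$ and therefore $\lambda=0$; thus $\mathrm{supp}\,\lambda\subseteq K'$ is compact. Hence $\lambda\in\Zc{k}(M)$ and its class maps to $[\omega]_{\B{k}}$, giving surjectivity. The only point demanding care — the ``main obstacle,'' though a mild one — is the geometric bookkeeping of the cutoff: one must arrange the vanishing region of $\rho$ to sit inside an open neighborhood of $K$ (so that $\rho\nu$ extends smoothly by zero) while simultaneously forcing $\rho\equiv 1$ outside a compact set (so that $\lambda$ is compactly supported). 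Existence of such a $\rho$ follows from a standard partition-of-unity/Urysohn construction once $K$ is enlarged to a compact set with nonempty interior separating it from infinity.
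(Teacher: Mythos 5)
Your proof is correct and takes essentially the same route as the paper: the paper's terse ``by using a bump function we can write $\omega = d\alpha + \omega_0$'' is exactly your construction, with $\alpha = \rho\nu$ (extended by zero across $K$) and $\omega_0 = \lambda = \omega - d(\rho\nu)$. Your version simply spells out the cutoff bookkeeping and the injectivity check that the paper leaves implicit.
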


\begin{proof}
Notice first that the preimage of $\B{k}(M)$ under the inclusion map is $\ov{\Bc{k}}(M)$, so the quotient map $\Zc{k}(M)/\ov{\Bc{k}}(M) \rightarrow \OZ{k}(M)/\B{k}(M)$ is well defined and injective. To show surjectivity, take any form $\omega \in  \OZ{k}(M)$. Since $\omega$ is exact outside a compact set, by using a bump function we can write $\omega = d\alpha + \omega_0$, where $\alpha\in\Omega^{k-1}(M)$ and $\omega_0\in Z_c^k(M)$. But this exactly says that the image of $[\omega_0]$ is $[\omega]$.
\end{proof}

Under this natural isomorphism we will represent the vector space $\OH{k}(M)$ as a quotient of the space of forms more appropriate to our goals. Forms with compact support are convenient for their relation with the restriction and extension maps of forms, that we describe now. Differentiable forms have the natural map \textit{restriction}(r) for $U \subset M$ a open set that makes the following diagram commute

\begin{equation}\label{diag:restriction}
\begin{tikzcd}
\Om{k}(M) \arrow[r, "r"] \arrow[d, "d"]
& \Om{k}(U) \arrow[d,"d"] \\
\Om{k+1}(M) \arrow[r, "r"]
& \Om{k+1}(U)
\end{tikzcd}
\end{equation}

Likewise, compactly supported forms have the natural map \textit{extension}(e) which is injective at all ranks and makes the following diagram commute

\begin{equation}\label{diag:extension}
\begin{tikzcd}
\Omc{k}(U) \arrow[r, hook, "e"] \arrow[d, "d"]
& \Omc{k}(M) \arrow[d,"d"] \\
\Omc{k+1}(U) \arrow[r, hook, "e"]
& \Omc{k+1}(M)
\end{tikzcd}
\end{equation}

Given Diagrams \ref{diag:restriction} and \ref{diag:extension} we know that restriction and extension preserve closed and exact forms.

We can describe the interaction between restriction and extension in the following commutative diagram

\begin{equation}\label{diag:cancellation}
\begin{tikzcd}
\Omc{k}(U) \arrow[r, hook, "e"] \arrow[d, hook, "i"]
& \Omc{k}(M) \arrow[d, hook, "i"] \\
\Om{k}(U) \arrow[r, leftarrow, "r"]
& \Om{k}(M)
\end{tikzcd}
\end{equation}

We can quickly compare the dimension $c_k$ with the Betti numbers $b_k = {\rm dim}_{\mathbb{R}} \frac{\Z{k}(M)}{\B{k}(M)}$.

\begin{lem}\label{lem:ckvsbk}For any differentiable manifold $M$ and non-negative integer $k$, $c_k(M) \leq b_k(M)$. Moreover, if $M$ is compact, then $c_k(M)=b_k(M)$.
\end{lem}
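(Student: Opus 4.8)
The plan is to establish the inequality $c_k(M) \le b_k(M)$ directly from the definitions, and then handle the compact case separately. For the inequality, observe that $\OZ{k}(M)$ is a subspace of $\Z{k}(M)$ containing $\B{k}(M)$: indeed, every $k$-form that is exact on the complement of a compact set is in particular closed, so the inclusion $\OZ{k}(M) \subseteq \Z{k}(M)$ holds, and $\B{k}(M) \subseteq \OZ{k}(M)$ was already noted just before Definition \ref{def:H1}. Therefore the natural map
\[
\OH{k}(M) = \OZ{k}(M)/\B{k}(M) \longrightarrow \Z{k}(M)/\B{k}(M) = \HR{k}(M)
\]
induced by the inclusion is well defined, and it is injective because $\B{k}(M)$ is exactly the preimage of $0$: a class in $\OZ{k}(M)/\B{k}(M)$ maps to $0$ iff the representing form lies in $\B{k}(M)$. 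An injective linear map cannot increase dimension, so $c_k(M) = \dim_{\mathbb{R}} \OH{k}(M) \le \dim_{\mathbb{R}} \HR{k}(M) = b_k(M)$.

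For the second assertion, suppose $M$ is compact. Then every closed $k$-form is trivially exact outside a compact set — take $K = M$ and the condition on $M \setminus K = \emptyset$ is vacuous — so $\OZ{k}(M) = \Z{k}(M)$. Consequently $\OH{k}(M) = \Z{k}(M)/\B{k}(M) = \HR{k}(M)$ and $c_k(M) = b_k(M)$. Alternatively, one may invoke Lemma \ref{lem:incisom}: when $M$ is compact, $\Omc{k}(M) = \Om{k}(M)$ (as noted after Diagram \ref{diag:dcommutes}), hence $\Zc{k}(M) = \Z{k}(M)$ and $\ov{\Bc{k}}(M) = \B{k}(M) \cap \Om{k}(M) = \B{k}(M)$, so the canonically isomorphic model $\Zc{k}(M)/\ov{\Bc{k}}(M)$ is literally $\HR{k}(M)$.

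I do not anticipate a genuine obstacle here; the only point requiring a moment's care is the well-definedness and injectivity of the comparison map, which amounts to checking that the preimage of $\B{k}(M)$ under $\OZ{k}(M) \hookrightarrow \Z{k}(M)$ is $\B{k}(M)$ itself — immediate since the inclusion is the identity on forms. The compact case is similarly a matter of unwinding the vacuous condition "exact outside a compact set."
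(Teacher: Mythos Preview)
Your proof is correct and follows exactly the same approach as the paper: the inequality comes from the injective map $\OH{k}(M) \hookrightarrow \HR{k}(M)$ induced by the chain of inclusions $\B{k}(M) \subseteq \OZ{k}(M) \subseteq \Z{k}(M)$, and the compact case follows because the last inclusion becomes an equality. Your write-up is simply more detailed (and includes a redundant alternative argument via Lemma~\ref{lem:incisom}), but the underlying idea is identical.
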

\begin{proof}
The inequality follows from the injective map $\frac{\OZ{k}(M)}{\B{k}(M)} \hookrightarrow \frac{\Z{k}(M)}{\B{k}(M)}$ induced by the inclusion $\B{k}(M) \subseteq \OZ{k}(M) \subseteq \Z{k}(M)$, where the last inclusion is an equality if $M$ is compact.
\end{proof}

Forms behave well with respect to pullbacks of smooth functions. Since at times we will be dealing with forms with compact support, we will restrict ourselves to proper function between manifold. Let them $f:M\rightarrow N$ be a smooth proper function, the \textit{pullback} of $f$ is the natural map of chains $f^*$ that makes the following diagrams commute.

\begin{eqnarray}
\begin{tikzcd}\label{diag:pullbackf}
\Om{k}(N) \arrow[r, "f^*"] \arrow[d, "d"]
& \Om{k}(M) \arrow[d,"d"] \\
\Om{k+1}(N) \arrow[r, "f^*"]
& \Om{k+1}(M)
\end{tikzcd}\quad
\begin{tikzcd}
\Omc{k}(N) \arrow[r, "f^*"] \arrow[d, "d"]
& \Omc{k}(M) \arrow[d,"d"] \\
\Omc{k+1}(N) \arrow[r, "f^*"]
& \Omc{k+1}(M)
\end{tikzcd}
\end{eqnarray}
which in particular concludes that $f^*$ can be defined in cohomology as map from $\frac{\Z{k}(N)}{\B{k}(N)}$ to $\frac{\Z{k}(M)}{\B{k}(M)}$.

Moreover, if $f,g:M\rightarrow N$ are homotopic, we know that there is a algebraic homotopy between $f^*$ and $g^*$ given by a linear map $P:\Om{k+1}(N) \rightarrow \Om{k}(M)$ that commutes with the inclusion of (\ref{diag:dcommutes}) and satisfies

\begin{equation}\label{eq:homalg}
f^* (\omega) - g^*(\omega) = P(d\omega) + dP(\omega)
\end{equation}
for all $\omega\in\Om{k}(M)$. In particular the maps $f^*, g^*:  \frac{\Z{k}(N)}{\B{k}(N)} \rightarrow \frac{\Z{k}(M)}{\B{k}(M)}$ are equal. The analogue statement is true if we switch to de Rham cohomology with compact support.

For the topological and geometric significance of the $c_k$ invariants (particularly $c_1$) we will focus on Riemann surfaces. We will adopt the notation of $\Sigma_{g,n}$ for the surface with genus $g$ and $n$ boundary components.

A connected Riemann surface $X$ is said to be \textit{planar} or \textit{schlichtartig} if every closed $1$-form on $X$ with compact support is exact. Under our notation this is the same as $c_1(X)$ vanishing, for which we have the Generalized Uniformization Theorem of Koebe (see for instance \cite{Simha89}).

\begin{theorem}[Generalized Uniformization Theorem]\label{thm:Simha}
Every planar Riemann surface is biholomorphic (i.e. conformally equivalent) to an
open subset of the Riemann sphere $\CC$.
\end{theorem}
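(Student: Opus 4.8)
The statement is the classical Generalized Uniformization Theorem, which the paper cites; the plan I would follow is the standard potential-theoretic route.

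The plan is to produce a meromorphic function $f$ on $X$ with a single simple pole and then prove that $f$ is injective; such an $f$ is automatically a biholomorphism onto an open subset $f(X)\subseteq\CC$, which is exactly the conclusion. Fix a point $p_0\in X$ together with a holomorphic coordinate $z$ centered at $p_0$, and exhaust $X$ by an increasing sequence of connected, relatively compact open subsurfaces $X_1\subset X_2\subset\cdots$ with smooth boundary and $\bigcup_n X_n=X$. On each $X_n$ solve a mixed boundary value problem for a harmonic function $u_n$ on $X_n\setminus\{p_0\}$ such that $u_n-\mathrm{Re}(1/z)$ extends harmonically across $p_0$ and $\partial u_n/\partial\nu=0$ along $\partial X_n$ (this Neumann data is admissible since $\mathrm{Re}(1/z)$ has zero flux around $p_0$, so the compatibility condition holds). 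After subtracting constants, Harnack's principle and normal families yield a subsequential limit: a harmonic function $u$ on $X\setminus\{p_0\}$ with $u-\mathrm{Re}(1/z)$ harmonic near $p_0$.

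Next I would use planarity to make the harmonic conjugate single-valued. Each $X_n$, being a relatively compact planar subsurface with smooth boundary, has $H_1(X_n;\mathbb{R})$ spanned by the components of $\partial X_n$; since $\ast du_n$ restricts to $0$ on $\partial X_n$ (that is precisely the Neumann condition) and the flux of $u_n$ around $p_0$ vanishes, every period $\int_\gamma\ast du_n$ is zero. Letting $n\to\infty$, the closed harmonic differential $\ast du$ on $X\setminus\{p_0\}$ has all periods equal to $0$, hence $\ast du=dv$ for a single-valued harmonic function $v$ on $X\setminus\{p_0\}$. Then $f:=u+iv$ is holomorphic on $X\setminus\{p_0\}$, and since $f(z)=1/z+(\text{holomorphic})$ near $p_0$ it is meromorphic on $X$ with a single pole, which is simple and located at $p_0$.

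The crux is to show $f$ is injective. Here one exploits that $u$ is not an arbitrary singular harmonic function but the solution of the extremal (Dirichlet-energy minimizing) problem among competitors with the prescribed singularity, which pins down the sign of a span-type quadratic functional. Counting preimages by the argument principle on the exhausting domains, $\#\{x\in X_n:f(x)=w\}=\frac{1}{2\pi i}\int_{\partial X_n}d\log(f-w)$, and if $f$ took some value twice (or had a critical point) one could splice a local modification near the offending points to produce a competitor with strictly smaller Dirichlet integral, contradicting extremality; hence $f$ assumes every value at most once. Injectivity together with holomorphy shows that $f:X\to f(X)$ is biholomorphic and $f(X)$ is open in $\CC$.

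The main obstacle is exactly this univalence step. Constructing the singular harmonic function and, once planarity is invoked, single-valuedness of its conjugate are "soft", following from standard elliptic theory and a homology computation on the pieces $X_n$. Global injectivity of the resulting meromorphic function, on the other hand, does not follow formally: it needs the extremal characterization of the principal function and a careful limiting argument in the argument-principle count, which is the technical heart of Koebe's theorem.
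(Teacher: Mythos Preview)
The paper does not give a proof of this theorem at all: it is quoted as a classical result and referenced to \cite{Simha89}. So there is no ``paper's own proof'' to compare against; your proposal is an attempt to supply what the paper deliberately outsources.

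Your overall architecture---produce a meromorphic $f$ with a single simple pole via a singular harmonic function with a global conjugate, then argue univalence---is indeed the classical route. The construction of $u$ and the single-valuedness of $v$ are essentially fine: each $X_n$ inherits $c_1(X_n)\le c_1(X)=0$ from Proposition~\ref{prop:cnondec}, so $X_n$ has genus $0$, $H_1(X_n;\mathbb{R})$ is spanned by boundary circles, and the Neumann condition kills all periods of $\ast du_n$; passing to the limit along compact cycles is legitimate. (You do sweep under the rug the local uniform bounds needed to extract a convergent subsequence of the $u_n$, but that is a standard maximum-principle estimate.)

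The genuine gap is your injectivity step. The sentence ``if $f$ took some value twice \ldots\ one could splice a local modification near the offending points to produce a competitor with strictly smaller Dirichlet integral, contradicting extremality'' is not a proof and, as stated, is not how univalence is obtained in any of the standard treatments. You have not set up an extremal problem for which $u$ is the minimizer, you have not specified the competitor, and there is no mechanism by which a double value of $f$ translates into a Dirichlet-energy drop for $u$. The ``span-type quadratic functional'' you allude to belongs to a different circle of ideas (comparing two principal functions) and does not by itself yield injectivity either. What actually makes the argument work is the \emph{separating property of Jordan curves} on a planar surface (equivalent to $c_1=0$): for large $R$ the level set $\{|f|=R\}$ is a single Jordan curve near $p_0$, it separates $X$, and $f$ restricted to the disk it bounds is a proper degree-one map to $\{|w|>R\}\cup\{\infty\}$; one then pushes $R$ downward, using planarity at each stage to see that the new level curve is again a single separating Jordan curve and the degree stays one. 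Your argument-principle formula $\frac{1}{2\pi i}\int_{\partial X_n}d\log(f-w)$ is the right object, but without controlling the boundary behaviour via this level-set/separation mechanism it does not give the desired count. As you yourself flag, this is the technical heart of Koebe's theorem, and your sketch does not yet contain it.
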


\section{Properties of $c_k$}\label{sec:ck}

Our first proposition is to back up the claim that $c_k$ is an invariant.

\begin{prop}\label{prop:cinv}
Let $M,\,N$ be differentiable manifolds such that they are homeomorphic. Then $c_k(M) = c_k(N)$
\end{prop}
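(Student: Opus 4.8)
The plan is to show that $c_k$ is a homeomorphism invariant by reducing to the fact that it is in fact a homotopy invariant, which is essentially what the algebraic homotopy formula \eqref{eq:homalg} already gives us. The subtlety is that our space $\OH{k}(M)$ is defined using forms that are exact outside a \emph{compact} set (or dually, forms with compact support), so naive pullback by a smooth map does not obviously respect the relevant subspaces — pullback along an arbitrary smooth map need not send compactly supported forms to compactly supported forms. This is why the excerpt restricted to \emph{proper} smooth maps when discussing $f^*$, and it is exactly the point around which the proof must be organized.

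First I would observe that a homeomorphism $\phi: M \to N$ is in particular a proper map, and that although $\phi$ itself may only be continuous, one can replace it by a smooth proper map. Concretely, I would invoke the standard fact that any continuous map between smooth manifolds is homotopic to a smooth one, and that a continuous proper map can be approximated (in a suitable fine/Whitney topology) by a smooth map that is still proper and still properly homotopic to the original; better yet, since $\phi$ is a homeomorphism, I can take smooth proper maps $f: M \to N$ and $g: N \to M$ with $f$ homotopic to $\phi$ and $g$ homotopic to $\phi^{-1}$ through proper homotopies, so that $g \circ f \simeq \mathrm{id}_M$ and $f \circ g \simeq \mathrm{id}_N$ properly. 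Then $f^*$ and $g^*$ induce maps on $\OH{k}$: using the compactly supported model $\OH{k}(M) \cong \Zc{k}(M)/\ov{\Bc{k}}(M)$ from Lemma \ref{lem:incisom}, properness guarantees $f^*$ carries $\Omc{k}(N) \to \Omc{k}(M)$ (second diagram of \eqref{diag:pullbackf}) and carries $\B{k}(N)$ into $\B{k}(N)$ and hence $\ov{\Bc{k}}(N)$ into $\ov{\Bc{k}}(M)$, so it descends to a well-defined linear map $\OH{k}(N) \to \OH{k}(M)$, and similarly for $g^*$.

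Next I would run the homotopy argument. The algebraic homotopy $P$ in \eqref{eq:homalg} commutes with the inclusions of \eqref{diag:dcommutes}, so when the homotopy between $g\circ f$ and $\mathrm{id}_M$ is proper, the operator $P$ restricts to compactly supported forms and the identity $(g\circ f)^*(\omega) - \omega = P(d\omega) + dP(\omega)$ holds within $\Omc{k}(M)$. Taking $\omega \in \Zc{k}(M)$ we get $(g\circ f)^*(\omega) - \omega = dP(\omega) \in \ov{\Bc{k}}(M)$, hence $f^* \circ g^* = (g\circ f)^* = \mathrm{id}$ on $\OH{k}(M)$; symmetrically $g^* \circ f^* = \mathrm{id}$ on $\OH{k}(N)$. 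Therefore $f^*: \OH{k}(N) \to \OH{k}(M)$ is an isomorphism of real vector spaces, and taking dimensions gives $c_k(M) = c_k(N)$.

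The main obstacle — and the part I would be most careful to spell out — is the smoothing/properness step: justifying that the homeomorphism $\phi$ can be replaced by smooth \emph{proper} maps $f, g$ that are proper-homotopy inverse to each other. This is where one needs the standard approximation theory (Whitney approximation relative to properness, or an exhaustion argument building $f$ chart-by-chart while controlling preimages of compact sets) rather than just the bare Whitney approximation theorem for continuous maps. Everything after that is formal diagram-chasing with the compactly supported model and the homotopy operator $P$, exactly paralleling the classical proof that de Rham cohomology with compact support is a proper-homotopy invariant.
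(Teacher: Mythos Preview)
Your argument is correct, and the overall architecture---replace the homeomorphism by smooth proper maps $f,g$ whose compositions are homotopic to identities, then deduce that $f^*,g^*$ are mutually inverse on $\OH{k}$---is the same as the paper's. The execution differs in one respect worth noting. You work in the compactly supported model $\Zc{k}/\ov{\Bc{k}}$ and invoke the chain homotopy $P$ directly; the paper instead stays in the model $\OZ{k}/\B{k}$, uses the injection $\OH{k}\hookrightarrow H^k$ from Lemma~\ref{lem:ckvsbk}, and reads off the identity $f^*\circ g^*=\mathrm{id}$ on $\OH{k}$ from the corresponding identity on ordinary de Rham cohomology via Diagram~(\ref{diag:inversefg}). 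The paper's route is slightly slicker because it outsources the homotopy formula entirely to standard de Rham theory. In particular, it makes transparent something you work harder than necessary for: you insist on a \emph{proper} homotopy so that $P$ preserves compact supports, but this is not needed. Even with an ordinary homotopy, for $\omega\in\Zc{k}(M)$ the difference $(g\circ f)^*\omega-\omega$ already has compact support (both terms do, since $g\circ f$ is proper), and it equals $dP(\omega)$ with $P(\omega)\in\Om{k-1}(M)$; hence it lies in $\B{k}(M)\cap\Omc{k}(M)=\ov{\Bc{k}}(M)$ regardless of whether $P(\omega)$ itself is compactly supported. So your ``main obstacle'' of arranging proper homotopies is a non-issue, and the approximation step reduces to the routine fact that a homeomorphism is homotopic to a smooth proper map.
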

\begin{proof}
Denote by $h: M \rightarrow N$ a homeomorphism between $M$ and $N$, $f:M\rightarrow N$ a differentiable proper function homotopic to $h$ and $g: N \rightarrow M$ a differentiable proper function homotopic to $h^{-1}$. Hence $g\circ f \sim_{\rm hom} id_M$ are differentiable and homotopic, they define the same map at the level of cohomology. Moreover, since $f^*$ is proper and commutes with the exterior derivative $d$, we have that $f^* (\OZ{k}(N))=\OZ{k}(M)$ (with the analogue statement for $g$). Since the following diagram commutes

\begin{equation}\label{diag:inversefg}
\begin{tikzcd}
\frac{\OZ{k}(M)}{\B{k}(M)} \arrow[r, rightarrow, "g^*"] \arrow[d, hook, "i"]
& \frac{\OZ{k}(N)}{\B{k}(N)} \arrow[r, rightarrow, "f^*"]\arrow[d, hook, "i"] 
&\frac{\OZ{k}(M)}{\B{k}(M)} \arrow[d, hook, "i"]\\
\frac{\Z{k}(M)}{\B{k}(M)} \arrow[r, rightarrow, "g^*"]
& \frac{\Z{k}(N)}{\B{k}(N)} \arrow[r, rightarrow, "f^*"]
& \frac{\Z{k}(M)}{\B{k}(M)}
\end{tikzcd}
\end{equation}
then the top composition $f^*\circ g^*$ must be the identity because the bottom composition $f^*\circ g^* = id^*_M$ is the identity and the down arrows are injective (as we saw in Lemma \ref{lem:ckvsbk}).

\end{proof}

The next proposition shows that $c_k$ is non-decreasing under inclusion. This together with Lemma \ref{lem:ckvsbk} implies that a necessary condition for a open $n$-manifold to embed into a compact manifold is that all $c_k$ must be finite. This also restrict the possibilities for the compact manifold since $c_k$ are lower bounds for the Betti numbers. Conversely, this gives a broad family of manifolds with finite $c_k$, namely open sets of compact manifolds.

\begin{prop}\label{prop:cnondec}
Let $U \subset M$ be a open set of a differentiable manifold $M$. Then $c_k(U) \leq c_k(M),\, \forall \, k\geq 0$.
\end{prop}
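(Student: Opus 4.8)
The plan is to use the compactly supported model of $\OH{k}$ given by Lemma \ref{lem:incisom}, together with the extension map $e\colon \Omc{k}(U)\to\Omc{k}(M)$ of Diagram \ref{diag:extension}. Concretely, I would build a linear map $\OH{k}(U)\to\OH{k}(M)$ and show it is injective. Since $e$ commutes with $d$, it sends $\Zc{k}(U)$ into $\Zc{k}(M)$ and $\Bc{k}(U)$ into $\Bc{k}(M)$; the point requiring care is that it also sends $\ov{\Bc{k}}(U)$ into $\ov{\Bc{k}}(M)$, i.e. if a compactly supported form on $U$ is $d$ of a (not necessarily compactly supported) form on $U$, then its extension by zero is $d$ of a form on $M$. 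This is where Diagram \ref{diag:cancellation} enters: if $\omega = d\nu$ with $\nu\in\Om{k-1}(U)$ and $\omega$ has compact support $L\subset U$, pick a bump function $\chi$ equal to $1$ near $L$ and supported in $U$; then $\chi\nu$ extends by zero to a form $\widetilde{\chi\nu}\in\Om{k-1}(M)$, and $\omega - d(\widetilde{\chi\nu})$ is a compactly supported form on $U$ supported where $d\chi\neq 0$, away from $L$. A second bump-function argument (or a direct check that this correction is itself exact with compact support on the annular region, hence lies in $\Bc{k}(U)\subseteq\ov{\Bc{k}}(U)$) shows that, modulo $\ov{\Bc{k}}$, the extension is well defined. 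Thus $e$ descends to $e_*\colon \OH{k}(U)\to\OH{k}(M)$.

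For injectivity I would argue as follows. Suppose $\omega\in\Zc{k}(U)$ and $e(\omega)\in\ov{\Bc{k}}(M)$, i.e. $e(\omega)=d\eta$ for some $\eta\in\Om{k-1}(M)$. I want to produce $\eta'\in\Om{k-1}(U)$ with $d\eta'=\omega$ on $U$, which would give $[\omega]=0$ in $\OH{k}(U)$. The form $e(\omega)$ has compact support $L\subset U$; outside $L$, $\eta$ is closed, so on $M\setminus L$ it represents a class. The issue is purely local near the "ends" of $U$: restricting $\eta$ to $U$ gives $r(\eta)\in\Om{k-1}(U)$ with $d(r(\eta)) = r(e(\omega)) = \omega$ by the commutativity of Diagram \ref{diag:restriction} and the fact that $r\circ e = \mathrm{id}$ on forms supported in $U$ (Diagram \ref{diag:cancellation}: restricting the extension of a form on $U$ recovers the form). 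Hence $\omega = d(r(\eta))$ with $r(\eta)\in\Om{k-1}(U)$, so $\omega\in\B{k}(U)$, and since $\omega$ has compact support, $\omega\in\ov{\Bc{k}}(U)$, i.e. $[\omega]=0$. Therefore $e_*$ is injective and $c_k(U)=\dim\OH{k}(U)\leq\dim\OH{k}(M)=c_k(M)$.

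The main obstacle is the bookkeeping in the first paragraph: verifying that $e$ genuinely descends to the quotient, since $\ov{\Bc{k}}$ is defined via primitives that need not have compact support, and naive extension by zero of such a primitive need not be smooth on $M$. The bump-function cutoff fixes smoothness but introduces an error term supported on an annulus; one must check this error is again in $\ov{\Bc{k}}(U)$. In fact the injectivity argument in the second paragraph, run in the case $\omega\in\ov{\Bc{k}}(U)$, essentially shows the well-definedness too, so the cleanest write-up may combine both: define $e_*$ on representatives via $[\omega]\mapsto[e(\omega)]$, and simultaneously check (i) $e(\omega)\in\Zc{k}(M)$, (ii) $e(\ov{\Bc{k}}(U))\subseteq\ov{\Bc{k}}(M)$ using $r\circ e=\mathrm{id}$, and (iii) $e(\omega)\in\ov{\Bc{k}}(M)\Rightarrow\omega\in\ov{\Bc{k}}(U)$, again via $r\circ e=\mathrm{id}$ and Diagram \ref{diag:restriction}. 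Steps (ii) and (iii) are really the same observation that $r$ is a one-sided inverse to $e$ and commutes with $d$, which is the heart of the proof.
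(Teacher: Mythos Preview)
Your injectivity argument in the second paragraph (what you later label (iii)) is correct and is exactly the paper's key step: the inclusion $e^{-1}(\ov{\Bc{k}}(M)) \subseteq \ov{\Bc{k}}(U)$ follows immediately from $r\circ e = \mathrm{id}$ on $\Omc{k}(U)$ together with the commutation of $r$ with $d$.

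The gap is step (ii): the inclusion $e(\ov{\Bc{k}}(U)) \subseteq \ov{\Bc{k}}(M)$ is \emph{false} in general, so the map $e_*\colon \OH{k}(U) \to \OH{k}(M)$ you want to build simply does not exist. Take $M = S^1$, $U = S^1 \setminus \{p\}$, and $\omega \in \Omc{1}(U)$ a bump $1$-form with $\int_U \omega = 1$. Since $U$ is an interval, $\omega$ is exact on $U$, so $\omega \in \ov{\Bc{1}}(U)$; but $\int_{S^1} e(\omega) = 1$, so $e(\omega) \notin \B{1}(S^1) = \ov{\Bc{1}}(M)$. Your bump-function manoeuvre cannot repair this: the ``annular'' error term $-d\chi \wedge \nu$ carries the whole integral and is not exact on $M$. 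Your claim that (ii) and (iii) are ``the same observation'' conflates a one-sided inverse with a two-sided one: $r\circ e = \mathrm{id}$ yields (iii), while (ii) would require something like $e\circ r = \mathrm{id}$, which is absurd.

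The paper sidesteps this by never defining a map $\OH{k}(U) \to \OH{k}(M)$. Instead it inserts the intermediate quotient $Q = \Zc{k}(U)/e^{-1}(\ov{\Bc{k}}(M))$: injectivity of $e$ gives an injection $Q \hookrightarrow \OH{k}(M)$, while your correct inclusion $e^{-1}(\ov{\Bc{k}}(M)) \subseteq \ov{\Bc{k}}(U)$ gives a surjection $Q \twoheadrightarrow \OH{k}(U)$. Hence $c_k(U) \leq \dim Q \leq c_k(M)$. You already have the only nontrivial ingredient; you just need to route it through $Q$ rather than through a nonexistent direct map.
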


\begin{proof}
The result will be deduced from the following commutative diagram

\begin{equation}\label{diag:nondec}
\begin{tikzcd}
\Zc{k}(U) \arrow[r,hook, "e"]\arrow[d,two heads]
& \Zc{k}(M) \arrow[d, two heads] \\
\frac{\Zc{k}(U)}{e^{-1}(\ov{\Bc{k}}(M))}  \arrow[r,hook, "e"] \arrow[d, two heads]
&\frac{\Zc{k}(M)}{\ov{\Bc{k}}(M)} \\
\frac{\Zc{k}(U)}{\ov{\Bc{k}}(U)}
\end{tikzcd}
\end{equation}
where we still have to justify the commutativity, as well as the injectivities and surjectivities claimed.

The first map is the extension map for closed forms, which gives us an injective map from $\Zc{k}(U)$ into $\Zc{k}(M)$. The down maps are the respective quotient maps which are obviously surjective, while the quotient map of $e$ (also called $e$) is injective thanks to the injectivity of $e$. As for the final arrow, by using Diagram \ref{diag:cancellation} applied to closed forms and the definition of $\ov{\Bc{k}}(M)$ (\ref{def:Bck}) we can notice that

\begin{equation}
e^{-1} (\ov{\Bc{k}}(M)) = i^{-1}(r\circ i(\ov{\Bc{k}(M)})) \subset i^{-1}(r(\B{k}(M))) \subset i^{-1}(\B{k}(U)) = \ov{\Bc{k}(U)},
\end{equation}
which tell us that the last vertical arrow is well defined and a surjection. Finally, the propostion follows from

\begin{equation}
c_k(U) = \dim\displaystyle\frac{\Zc{k}(U)}{\ov{\Bc{k}}(U)} \leq \dim\displaystyle\frac{\Zc{k}(U)}{e^{-1}(\ov{\Bc{k}}(M))} \leq \dim\displaystyle\frac{\Zc{k}(M)}{\ov{\Bc{k}}(M)} = c_k(M)
\end{equation}
since $\displaystyle\frac{\Zc{k}(U)}{e^{-1}(\ov{\Bc{k}}(M))}$ has a surjective map to $\displaystyle\frac{\Zc{k}(U)}{\ov{\Bc{k}}(U)}$ and an injective map to $\displaystyle\frac{\Zc{k}(M)}{\ov{\Bc{k}}(M)}$.
\end{proof}

Observe then that from Diagram \ref{diag:nondec} that if $c_k(U)$ is finite dimensional then there exists $R^U_k$, a $c_k$-dimensional subspace of $\Zc{k}(U)$, such that $R_k^U \overset{e}{\rightarrow} \Zc{k}(M)$ passes through the quotients as a non-cannonical injection $\displaystyle\frac{\Zc{k}(U)}{\ov{\Bc{k}}(U)} \overset{e_U}{\hookrightarrow} \displaystyle\frac{\Zc{k}(M)}{\ov{\Bc{k}}(M)} $. An easy consequence of this lemma and Lemma \ref{lem:ckvsbk} is the following corollary.

\begin{cor}\label{cor:flat}
Let $U\subset S^2$ be an open set. Then $c_1(U)=0$.
\end{cor}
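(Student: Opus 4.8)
The plan is to combine the two facts that are already available in the excerpt. Proposition~\ref{prop:cnondec} tells us that for any open set $U \subset S^2$ we have $c_1(U) \le c_1(S^2)$, and Lemma~\ref{lem:ckvsbk} tells us that since $S^2$ is compact, $c_1(S^2) = b_1(S^2)$. So the entire argument reduces to the observation that $b_1(S^2) = 0$, which is the standard computation of the first Betti number of the $2$-sphere (the sphere is simply connected, hence $H^1_{\mathrm{dR}}(S^2) = 0$).

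Concretely, I would write: by Proposition~\ref{prop:cnondec} applied to the inclusion $U \subset S^2$ we get $c_1(U) \le c_1(S^2)$; by Lemma~\ref{lem:ckvsbk}, since $S^2$ is compact, $c_1(S^2) = b_1(S^2) = 0$; therefore $0 \le c_1(U) \le 0$, so $c_1(U) = 0$. That is the whole proof.

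There is essentially no obstacle here — the corollary is a formal consequence of the two preceding results plus a well-known topological fact. If one wanted to be self-contained about $b_1(S^2)=0$, the only "work" would be to recall that $S^2$ is simply connected (or to cite de Rham's theorem together with $H_1(S^2;\mathbb{R}) = 0$), but within the framework of this paper that is surely assumed known. The one thing to be careful about is simply invoking the two lemmas with the correct orientation of the inequalities, and noting that compactness of $S^2$ is what upgrades $c_1 \le b_1$ to an equality. A one- or two-sentence proof is appropriate, mirroring the phrasing "An easy consequence of this lemma and Lemma~\ref{lem:ckvsbk}" that already precedes the statement.
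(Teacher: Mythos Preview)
Your proposal is correct and matches the paper's own argument exactly: the corollary is stated as an immediate consequence of Proposition~\ref{prop:cnondec} (giving $c_1(U)\le c_1(S^2)$) and Lemma~\ref{lem:ckvsbk} (giving $c_1(S^2)=b_1(S^2)=0$), precisely as you outline.
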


The main result of this paper can be thought as a converse of this statement for $2$-dimensional manifold. Before restricting ourselves to the $2$-dimensional case, let us address two more results for $c_k$, starting with the following lemma.

\begin{lem}\label{lem:minusB}
Let $M$ be a differentianle $n$-manifold, $0<k \neq n$ and integer and $B$ a $n$-ball in $M$ with compact closure. Then $c_k(M\setminus \ov{B}) = c_k(M)$.
\end{lem}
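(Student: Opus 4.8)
The plan is to show that the extension map $e$ induces an isomorphism $\OH{k}(U)\overset{\cong}{\to}\OH{k}(M)$, where $U:=M\setminus\ov{B}$. Since $U$ is open in $M$, Proposition \ref{prop:cnondec} already gives $c_k(U)\le c_k(M)$ together with the chain of maps in Diagram \ref{diag:nondec}; what remains is to promote the two inequalities there to equalities. Concretely I need (a) that the injection $\Zc{k}(U)/e^{-1}(\ov{\Bc{k}}(M))\hookrightarrow\OH{k}(M)$ is surjective, and (b) that $\ov{\Bc{k}}(U)\subseteq e^{-1}(\ov{\Bc{k}}(M))$ (the reverse inclusion is contained in the proof of Proposition \ref{prop:cnondec}). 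Degrees $k>n$ are trivial since then $\Om{k}\equiv 0$, so I assume $1\le k\le n-1$, in particular $n\ge 2$. I fix a slightly larger ball $B'$ with $\ov{B}\subset B'$, $\ov{B'}$ compact and diffeomorphic to a closed ball; in part (b) I take it small enough that $\ov{B'}$ is disjoint from the support of $\omega'$. The annulus $A:=B'\setminus\ov{B}$ is then homotopy equivalent to $S^{n-1}$.

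For (a), take $\omega\in\Zc{k}(M)$. As $k\ge 1$ and $B'$ is contractible, the Poincar\'e lemma gives $\omega|_{B'}=d\beta$ with $\beta\in\Om{k-1}(B')$. Choosing a bump function $\rho$ that is $\equiv 1$ on a neighborhood of $\ov{B}$ and supported in $B'$, the form $\omega':=\omega-d(\rho\beta)$ is closed, compactly supported, and vanishes near $\ov{B}$ (there $\rho\equiv 1$, so $d(\rho\beta)=d\beta=\omega$); hence $\omega'\in\Zc{k}(U)$ and $\omega-e(\omega')=d(\rho\beta)\in\Bc{k}(M)\subseteq\ov{\Bc{k}}(M)$. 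Thus $[\omega]$ is hit, proving surjectivity. This step only uses $k\ge 1$.

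The main obstacle is (b). Let $\omega'\in\ov{\Bc{k}}(U)$, say $\omega'\in\Omc{k}(U)$ with $\omega'=d\gamma$ for some $\gamma\in\Om{k-1}(U)$; I must extend the primitive across the ball to produce $\Gamma\in\Om{k-1}(M)$ with $d\Gamma=e(\omega')$. By the choice of $B'$ we have $\omega'\equiv 0$ on $\ov{B'}$, so $\gamma$ is closed on $A$. Here the degree restriction enters: $H^{k-1}(A)\cong H^{k-1}(S^{n-1})$ vanishes for $2\le k\le n-1$, so $\gamma|_A=d\sigma$ is exact, while for $k=1$ the function $\gamma|_A$ is constant on the connected set $A$. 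In the first case I interpolate with a cutoff $\psi$ equal to $1$ near $\ov{B}$ and $0$ near $\partial B'$ (transitioning within $A$): the form $\gamma-d(\psi\sigma)$ vanishes near $\ov{B}$ and equals $\gamma$ near $\partial B'$, so declaring $\Gamma$ to be $0$ on $\ov{B}$, $\gamma-d(\psi\sigma)$ on $A$, and $\gamma$ on $U\setminus A$ yields a global smooth form with $d\Gamma=e(\omega')$; for $k=1$ one simply extends $\gamma$ by the constant value $\gamma|_A$ into $\ov{B}$. Hence $e(\omega')\in\ov{\Bc{k}}(M)$, giving (b).

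With (a) and (b) in hand, every inequality in the chain of Proposition \ref{prop:cnondec} is an equality and $c_k(U)=c_k(M)$. I expect (b) to be the delicate point, and it is exactly where $k\ne n$ is indispensable: for $k=n$ one has $H^{n-1}(S^{n-1})\cong\mathbb{R}\ne 0$, the primitive need not extend, and the statement genuinely fails, as $c_n(S^n)=1$ while $c_n(S^n\setminus\ov{B})=c_n(\mathbb{R}^n)=0$.
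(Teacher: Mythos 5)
Your proposal is correct and takes essentially the same approach as the paper's proof: both use the Poincar\'e lemma plus a bump function to replace a given $\omega\in\Zc{k}(M)$ by a cohomologous form vanishing near $\ov{B}$, and both handle the crucial degree restriction by extending a primitive across the ball via the vanishing of $H^{k-1}$ of the surrounding annulus when $1<k\neq n$ (constancy of the primitive when $k=1$). The only difference is bookkeeping: you prove the subspace equality $\ov{\Bc{k}}(U)=e^{-1}(\ov{\Bc{k}}(M))$ and surjectivity of the canonical injection in Diagram \ref{diag:nondec} directly, whereas the paper routes the same two steps through the non-canonical section $R^U_k$ of Proposition \ref{prop:cnondec}, so your packaging is marginally cleaner (it never needs $c_k(U)$ to be finite).
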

\begin{proof}
Given Proposition \ref{prop:cnondec} with $U = M\setminus \ov{B}$ and the comment right after it, the result will follow after showing that $R^U_k \overset{e_U}\hookrightarrow \displaystyle\frac{\Zc{k}(M)}{\ov{\Bc{k}}(M)}$ is a surjection (since it is already injective). Choose $B_4 \overset{\circ}\supset B_3 \overset{\circ}\supset B_0 = B$ balls with compact closure in $M$, $\varphi\in \Omc{0}(M)$ a function with support in $B_4$ and equal to $1$ in $B_3$. Furthermore, for a given element in $\displaystyle\frac{\Zc{k}(M)}{\ov{\Bc{k}}(M)}$ represented by $\omega \in \Zc{k}(M)$, take $\eta\in \Om{k-1}(B_4)$ such that $\omega = d\eta$ in $B_4$ (this is possible since balls are contractible). Then $\omega-d(\varphi\eta)$ vanishes in $B_3$, so it is the extension of a form in $\Zc{k}(U = M\setminus \ov{B_0})$, which we will keep denoting by the same expression. Then exists $\mu \in \Om{k-1}(U)$ such that $\ov{\omega} = \omega - d(\varphi\eta) - d(\mu) \in R^U_k$. We would like to say that the image of $\ov{\omega}$ under $e_U$ is $\omega$, but this is true if only if we can select $\mu$ such that $d(\mu) \in \ov{\Bc{k}}(M)$, which in turn is true if we can pick $\mu$ that is the restriction of a $(k-1)$-form from $M$.

In order to do so, since the support of $d(\mu)$ is a compact in $U = M\setminus \ov{B_0}$, we can pick $B_3\supset B_2 \supset B_1 \supset B_0$ such that $d(\mu)$ vanishes in $B_2\setminus \ov{B_0}$, so then $\mu\in \Z{k-1}(B_2\setminus \ov{B_0})$. For $k>1$, since for usual real cohomology ${\rm H}^\ell(M) = \frac{\Z{\ell}(M)}{\B{\ell}(M)}$ we know that for $k\neq n$, ${\rm H}^{k-1} (B_2\setminus \ov{B_0}) = 0$, then there is $\nu\in \Om{k-2}(B_2\setminus \ov{B_0})$ such that $d\nu = \mu$. Take then $\phi \in \Omc{k-1}(B_2)$ equal to $1$ in $B_1$ so then $\mu-d(\phi\nu)$ can be extended as $\mu$ outside of $B_2$ and as $0$ inside of $B_0$, so $\ov{\omega} = \omega - d(\varphi\eta) - d(\mu - d(\phi\nu))$ has image $\omega$ under $e_U$. If $k=1$, $\mu$ is a constant function in $B_2\setminus \ov{B_0}$, since $d\mu = 0$ and $B_2\setminus \ov{B_0}$ is connected. Then $\mu$ extends as a constant inside $B_0$, and we have again that $\ov{\omega} = \omega - d(\varphi\eta) - d(\mu)$ has image $\omega$ under $e_U$.

\end{proof}

We have the following easy application. For a surface $\Sigma$ with genus $g$ and $n$ punctures, $c_1 = 2g$. Lemma \ref{lem:minusB} tells us that $\Sigma$ has the same $c_1$ invariant as the closed surface of genus $g$, and for this surface $c_1$ coincides with the first Betti number $b_1$, which is equal to $2g$.

\begin{prop}\label{prop:additive} Let $M = N_1 \# N_2$ be the connected sum of two $n$-differentiable manifolds $N_1, N_2$. Then $c_k(M) = c_k(N_1) + c_k (N_2)$ for $0< k \neq n$.
\end{prop}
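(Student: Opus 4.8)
The plan is to realise the connected sum geometrically and reduce everything to two facts about the gluing ``neck''. I would write $M=N_1\#N_2=V_1\cup V_2$, where $V_i$ is an open set diffeomorphic to $N_i\setminus\ov{B_i}$ (the $i$-th summand with its ball removed, thickened by a collar) and $W:=V_1\cap V_2$ is diffeomorphic to $S^{n-1}\times\mathbb{R}$. Since $0<k\neq n$, Lemma \ref{lem:minusB} gives $c_k(V_i)=c_k(N_i)$, so it suffices to prove $c_k(M)=c_k(V_1)+c_k(V_2)$. From Proposition \ref{prop:cnondec} and the comment after it we already have injections $e_{V_i}\colon \OH{k}(V_i)\hookrightarrow \OH{k}(M)$ induced by extension by zero, so the statement amounts to showing $\OH{k}(M)=\Ima(e_{V_1})\oplus\Ima(e_{V_2})$, i.e. that these images span and meet only in $0$. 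The two inputs about the neck are: every $\tau\in\Zc{k}(W)$ is ordinary-exact (so $c_k(W)=0$), because $\tau$ restricts to $0$ on any slice $S^{n-1}\times\{t\}$ outside its support and $\HR{k}(W)\cong\HR{k}(S^{n-1})$ is read off such a slice; and the Thom isomorphism $\pi_*\colon\Hc{k+1}(W)\xrightarrow{\cong}\HR{k}(S^{n-1})$ given by integration along the $\mathbb{R}$-factor of $\pi\colon W\to S^{n-1}$.

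For spanning, I would take $\omega\in\Zc{k}(M)$ and a cutoff $\rho=\rho(t)$ on $W$ depending only on the $\mathbb{R}$-coordinate, equal to $1$ on the $V_1$-side and $0$ on the $V_2$-side, so that $\omega_1:=\rho\,\omega$ and $\omega_2:=(1-\rho)\,\omega$ are compactly supported in $V_1,V_2$ and $d\omega_1=d\rho\wedge\omega=-d\omega_2$ is supported in a compact band of $W$. Writing $\omega=\omega''+dt\wedge\omega'$ on $W$, closedness of $\omega$ forces $\partial_t\omega''=d_S\omega'$ (with $d_S$ the differential along $S^{n-1}$); integrating by parts and using that $\omega$ vanishes for $|t|$ large gives
\[
\pi_*(d\rho\wedge\omega)=\int_{\mathbb{R}}\rho'(t)\,\omega''(\cdot,t)\,dt=-\,d_S\!\left(\int_{\mathbb{R}}\rho(t)\,\omega'(\cdot,t)\,dt\right),
\]
which is exact on $S^{n-1}$. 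By the Thom isomorphism $d\rho\wedge\omega=d\beta$ for some $\beta\in\Omc{k}(W)$, so $\omega_1-\beta$ and $\omega_2+\beta$ are closed, compactly supported in $V_1,V_2$, and sum to $\omega$; hence $[\omega]=e_{V_1}[\omega_1-\beta]+e_{V_2}[\omega_2+\beta]$.

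For the direct-sum property, suppose $e_{V_1}[\alpha]=e_{V_2}[\beta]$ with $\alpha\in\Zc{k}(V_1),\,\beta\in\Zc{k}(V_2)$. Then $e\alpha-e\beta=d\gamma$ for some $\gamma\in\Om{k-1}(M)$; restricting to $V_1$ and using that restricting an extension-by-zero returns the original form (Diagram \ref{diag:cancellation}) gives $\alpha-\beta|_{V_1}=d(\gamma|_{V_1})$ on $V_1$. Now $\beta|_{V_1}\in\Zc{k}(V_1)$ is supported in the neck $W$, and any $k$-cycle in $V_1$ can be pushed off the collar into the body of $V_1$, so all periods of $\beta|_{V_1}$ vanish and $\beta|_{V_1}$ is exact in $V_1$. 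Therefore $\alpha$ is exact in $V_1$, i.e. $[\alpha]=0$ in $\OH{k}(V_1)$, so the shared class is $0$. Combined with spanning this yields $c_k(M)=c_k(V_1)+c_k(V_2)=c_k(N_1)+c_k(N_2)$.

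The hard part will be the spanning step: one must upgrade the naive partition-of-unity splitting $\omega=\omega_1+\omega_2$, whose summands are \emph{not} closed, to a splitting into closed compactly supported forms, and the only way the correction $d\rho\wedge\omega$ can be killed by a compactly supported primitive on the neck is through the product structure $S^{n-1}\times\mathbb{R}$ together with the vanishing of the boundary terms, which themselves rely on $\omega$ being both closed and compactly supported. This is precisely where the hypothesis $0<k\neq n$ enters — both to invoke Lemma \ref{lem:minusB} and to keep the computation of $\Hc{\bullet}(S^{n-1}\times\mathbb{R})$ in the expected range. The period argument for directness is comparatively routine once the collar has been set up carefully.
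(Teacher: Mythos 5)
Your strategy has the same skeleton as the paper's (reduce to $V_i\cong N_i\setminus\ov{B_i}$ via Lemma \ref{lem:minusB}, then split a class of $\OH{k}(M)$ across the neck), with the paper's Mayer--Vietoris step replaced by a hands-on Thom-isomorphism computation; but that computation contains a genuine error. You kill the boundary terms in the integration by parts ``using that $\omega$ vanishes for $|t|$ large,'' and this is false: $\omega$ is compactly supported in $M$, not in $W$, and the two ends of the neck are \emph{interior} to $M$ (they limit into the bodies of $V_1$ and $V_2$), so $\omega|_W$ need not decay. In the $S^{n-1}\times\mathbb{R}$ parametrization only the $dt\wedge\omega'$ component picks up a decaying Jacobian factor; the tangential part $\omega''(\cdot,t)$ converges to the restriction of $\omega$ to the limiting sphere, which is generically nonzero. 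The correct identity is $\pi_*(d\rho\wedge\omega)=\pm\,\omega''(\cdot,t_0)-d_S\bigl(\int\rho\,\omega'\,dt\bigr)$ for a slice $t_0$ past which $\rho\equiv 1$, so the obstruction in $\HR{k}(S^{n-1})$ is exactly the class of $\omega$ restricted to a separating sphere. For $0<k<n-1$ that class vanishes for the trivial reason $\HR{k}(S^{n-1})=0$ (no integration by parts needed at all); the one degree in your range where something must be proved is $k=n-1$, where you need $\int_{S^{n-1}\times\{t_0\}}\omega=0$, which follows from Stokes because that sphere bounds one side of $M$ and $\omega$ is closed with compact support. Note that this problematic degree is precisely the paper's special case (the one where $\Hc{k+1}$ of the neck is nonzero), and it is also precisely the case the paper needs for its main theorem: for surfaces $n=2$, $k=1$, and $\HR{1}(S^1)\neq 0$, so your ``uniform'' argument proves nothing there without the Stokes input.

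There is a second, independent gap: the maps $e_{V_i}$ are \emph{not} ``induced by extension by zero'' on the quotients. Extension need not send $\ov{\Bc{k}}(V_i)$ into $\ov{\Bc{k}}(M)$, since a compactly supported form on $V_i$ may equal $d\nu$ for $\nu\in\Om{k-1}(V_i)$ admitting no extension to $M$ (for a general open subset this genuinely fails: an annulus $U\subset T^2$ with $k=1$ and a primitive taking different constant values near the two ends gives $e(d\nu)$ with nonzero period). This is exactly why the paper defines $e_{V_i}$ only after choosing the representative subspaces $R^{V_i}_k$. Consequently the identity $e_{V_i}[\alpha]=[e\alpha]$, which you use twice --- to conclude $[\omega]=e_{V_1}[\omega_1-\beta]+e_{V_2}[\omega_2+\beta]$ from the pointwise splitting, and to turn $e_{V_1}[\alpha]=e_{V_2}[\beta]$ into $e\alpha-e\beta=d\gamma$ --- is unproved. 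In the connected-sum setting it is true, but proving it is the real technical content: one must show representatives can be corrected by exact forms whose primitives extend to $M$, which is what the paper's final step does (modify $\mu_i$ so it extends either by zero, after subtracting $d(\phi\nu)$, or as a constant, using that the neck cross-section is a sphere). A smaller slip of the same kind: $(e\beta)|_{V_1}$ need not lie in $\Zc{k}(V_1)$, since its support is closed in $V_1$ but not necessarily compact; your period/push-off argument for its ordinary exactness survives, but the statement as written is wrong. In sum, the plan is repairable, but both the spanning and directness steps currently rest on these unjustified claims, and the repair is exactly where the paper (via Mayer--Vietoris plus its primitive-extension argument) spends its effort.
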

\begin{proof}
First of all, we can assume that $c_k(N_1), c_k(N_2)$ are both finite, since otherwise the result follows from Proposition \ref{prop:cnondec}. Now, because of the same comment at the end of the proof of this Proposition \ref{prop:cnondec}, we have the following map

\begin{equation}\label{eq:summap}
\displaystyle\frac{\Zc{k}(U)}{\ov{\Bc{k}}(U)} \oplus \displaystyle\frac{\Zc{k}(V)}{\ov{\Bc{k}}(V)}  \xrightarrow{e_U \oplus e_V} \displaystyle\frac{\Zc{k}(M)}{\ov{\Bc{k}}(M)}
\end{equation}
where $U, V$ are the copies of $N_1\setminus \ov{B}, N_2\setminus \ov{B}$ in $M = N_1 \# N_2$, for balls in $N_1, N_2$. Moreover, we can assume that the support of the elements in $R^U_k, R^V_k$ do not intersect the $S^{n-1}\times]-1,1[$ region used to glue $M$. Thanks to Lemma \ref{lem:minusB} the result will follow if we show that $e_U \oplus e_V$ is a bijection.

Let us show first that $e_U \oplus e_V$ is injective. Assume by contradiction that there are $\omega_U, \omega_V$ elements in $R^U_k, R^V_k$ (respectively) such that the image of $\omega_U + \omega_V$ is $0=\displaystyle\frac{\Zc{k}(M)}{\ov{\Bc{k}}(M)}$ in \ref{eq:summap}. Then there exists $\eta\in\Om{k-1}(M)$ such that $(e_U \oplus e_V)(\omega_U + \omega_V) = d\eta$ in $\Omc{k}(M)$. But since $\omega_U, \omega_V$ are supported away from the gluing region $S^{n-1}\times]-1,1[$, the restriction maps $r_U, r_V$ and Diagram \ref{diag:cancellation} gives us the equations

\begin{eqnarray}
\omega_U = d(r_U(\eta))\\
\omega_V = d(r_V(\eta))
\end{eqnarray}
which implies that $\omega_U, \omega_V$ belong to $\ov{\Bc{k}}(U), \ov{\Bc{k}}(V)$, respectively. This concludes the proof that $e_u\oplus e_V$ is injective.

Now let us prove that $e_U \oplus e_V$ in \ref{eq:summap} is surjective. Similar to the proof of Lemma \ref{lem:minusB}, we want to show that $\omega \in \Zc{k}(M)$ can be written as $\omega_1 + \omega_2 + d\eta$, with $\omega_i$ with compact support in $N_i\setminus\ov{B},\, i=1,2$, and $\eta\in\Z{k-1}(M)$. Let us divide in two cases:

\begin{itemize}
\item For $k\neq n-2$, recall that for compactly supported de Rham cohomology $\Hc{k}(M) = \frac{\Zc{k}(M)}{\Bc{k}(M)}$ we have the following Mayer-Vietoris exact sequence induced by the extension maps:

\begin{equation}
\ldots\rightarrow  \Hc{k}(S^{n-1}\times]-1,1[)  \rightarrow \Hc{k}(N_1\setminus \ov{B}) \oplus \Hc{k}(N_2\setminus \ov{B}) \rightarrow \Hc{k}(M) \rightarrow \Hc{k+1}(S^{n-1}\times]-1,1[) \rightarrow \ldots
\end{equation}
Then since $k\neq n-2$, $\Hc{k+1}(S^{n-1}\times]-1,1[)=0$ so $[\omega]\in\Hc{k}(M)$ is the image of $([\omega_1], [\omega_2])\in \Hc{k}(N_1\setminus \ov{B}) \oplus \Hc{k}(N_2\setminus \ov{B})$. This means that $\omega$ is equal to $\omega_1 + \omega_2 + d\eta$, where we are using the same notation of $\omega_i$ for their extensions to $\Omc{k}(M)$, and $\eta\in\Z{k-1}(M)$.

\item For $k=n-2$ recall that $\HR{n-2}(S^{n-1}\times]-1,1[)= 0$. Then for a given $\omega\in\Zc{k}(M)$ there exists $\rho\in\Z{k-1}(S^{n-1}\times]-1,1[)$ such that $d\rho=\omega$ in $S^{n-1}\times]-1,1[$. Then as in Lemma \ref{lem:minusB} we can choose $\varphi$ function with compact support in $(S^{n-1}\times]-1,1[)$ such that $\omega-d(\varphi\rho)$ vanishes in $S^{n-1}\times]-\frac12,\frac12[$. Then take $\eta=\varphi\rho$

Now again as in Lemma \ref{lem:minusB} we find $\mu_i\in \Om{k-1}(N_i\setminus \ov{B})$ such that $\ov{\omega_i} = \omega_i-d\mu_i \in R^{N_i\setminus\ov{B}}_k$. The proof will be complete as soon as we manage to pick $\mu_i$ that extends to all $M$. But as in Lemma \ref{lem:minusB} we extend either as $0$ after taking out a term $d(\phi\nu), \phi\nu\in\Om{k-2}(M)$ or as a constant function. Then we will have that $e_U\oplus e_V$ is surjective.
\end{itemize}

\end{proof}

\section{Genus of an open surface}\label{sec:genus}

Observe we can use Proposition \ref{prop:additive} and Corollary \ref{cor:flat} to conclude that if $\Sigma$ is made out of the connected sum of a genus $g$ surface and some flat surfaces, then $c_1(\Sigma) = 2g$. Then we can ask ourselves if that is the case whenever $c_1$ is finite, which is the main result of this article.

\begin{theorem}\label{thm:genus}
Let $X$ be a Riemann surface such that $c_1(X)$ is finite. Then $c_1(X)$ es even and $X$ can be conformally embedded into a compact Riemann surface of genus $g=c_1(X)/2$. Moreover, $g$ this is the smallest genus where $X$ can be embedded and in fact $g$ is the genus of $X$.
\end{theorem}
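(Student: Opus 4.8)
The plan is to extract a \emph{finite classical genus} from the hypothesis, then perform surgery reducing everything to Koebe's Theorem~\ref{thm:Simha}, and finally read off the numerical claims by comparing $c_1$ with Betti numbers via Lemmas~\ref{lem:minusB} and~\ref{lem:ckvsbk}. So the analytic content is entirely borrowed from Koebe; the work is structural.

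\emph{Step 1: a compact core.} I would fix a canonical exhaustion $\Sigma_1\subset\Sigma_2\subset\cdots$ of $X$ by connected compact bordered subsurfaces whose complements have no relatively compact component. The interior of $\Sigma_i$ is a closed surface of genus $g_i$ with finitely many closed discs removed, so Lemma~\ref{lem:minusB} (applied repeatedly) and Lemma~\ref{lem:ckvsbk} give $c_1(\mathrm{int}\,\Sigma_i)=2g_i$; Proposition~\ref{prop:cnondec} then forces $2g_i\le c_1(X)<\infty$, so $g_i$ is bounded and stabilizes to some integer $g$ for $i\ge N$. A short Euler-characteristic count shows that once the genus has stabilized, every complementary component of $\Sigma_N$ is planar and has exactly one boundary circle (a component of genus $\gamma$ with $k$ boundary circles would, glued back onto a connected compact hull, produce a subsurface of genus $g+\gamma+k-1\le g$, forcing $\gamma=0$, $k=1$). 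Hence $X=\Sigma\cup_{\gamma_1}P_1\cup\cdots\cup_{\gamma_n}P_n$ with $\Sigma$ compact of genus $g$ and boundary circles $\gamma_1,\dots,\gamma_n$, and each $P_j$ an open planar Riemann surface glued along $\gamma_j$.

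\emph{Step 2: cap the ends conformally.} For each $j$ let $Q_j\subset X$ be a small open planar \emph{neighbourhood} of $\gamma_j$ (a collar of $\gamma_j$ inside $\Sigma$ together with $P_j$). By Theorem~\ref{thm:Simha} there is a conformal embedding $e_j\colon Q_j\hookrightarrow\CC$; it sends $\gamma_j$ to a Jordan curve $C_j$, sends the $\Sigma$-side collar into one complementary disc of $C_j$ and sends $P_j$ into the other, whose closure $D_j\subset\CC$ is a closed disc with $\partial D_j=C_j$. I then form $\overline X:=\Sigma\cup_{\gamma_1}D_1\cup\cdots\cup_{\gamma_n}D_n$, a compact surface of genus $g$, equipped with the complex structure whose charts on $\mathrm{int}\,\Sigma$ are those of $X$, whose charts on $\mathrm{int}\,D_j$ are those of $\CC$, and whose transition across $\gamma_j$ is $e_j$ (so a neighbourhood of $\gamma_j$ in $\overline X$ is literally identified with the neighbourhood $e_j(Q_j)$ of $C_j$ in $\CC$). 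With this structure the tautological map $X\to\overline X$, namely the identity on $\mathrm{int}\,\Sigma$ and $e_j$ on each $Q_j$, is well defined, holomorphic, and injective, hence a conformal embedding of $X$ into the compact genus-$g$ surface $\overline X$.

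\emph{Step 3: numerology and minimality.} Since $\overline X$ is compact of genus $g$, Lemma~\ref{lem:ckvsbk} gives $c_1(\overline X)=b_1(\overline X)=2g$, so Proposition~\ref{prop:cnondec} yields $c_1(X)\le 2g$; together with $c_1(X)\ge c_1(\mathrm{int}\,\Sigma)=2g$ from Step~1 this gives $c_1(X)=2g$, in particular $c_1(X)$ is even and $g=c_1(X)/2$. If $X$ embeds conformally into any compact Riemann surface $Y$ of genus $g'$, then $2g'=c_1(Y)\ge c_1(X)=2g$, so $g$ is the smallest attainable genus; and since every compact subsurface of $X$ lies in some $\Sigma_i$ and has genus $\le g_i\le g$ while $\Sigma$ realizes $g$, this $g$ is the genus of $X$. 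The step I expect to demand the most care is the structural reduction of Step~1 (getting the complementary ends to be planar with a single boundary curve) and, in Step~2, checking that the glued atlas on $\overline X$ is genuinely holomorphic and that the tautological map is an embedding — the key trick being to take $Q_j$ to be a neighbourhood of $\gamma_j$ rather than just $P_j$, so that the transition functions across $\gamma_j$ are restrictions of the one holomorphic map $e_j$.
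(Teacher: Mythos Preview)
Your argument is correct and follows the same overall strategy as the paper's: exhaust $X$ by compact subsurfaces until the genus stabilizes at some $g$, show each complementary end is planar with a single boundary circle, apply Koebe's Theorem~\ref{thm:Simha} to each end, and cap off with discs from $\overline{\mathbb{C}}$ to produce a compact Riemann surface of genus $g$ containing $X$.

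There are two minor methodological differences worth recording. First, to prove the ends are planar with one boundary circle you use an elementary genus/Euler-characteristic count (a complementary piece of genus $\gamma$ with $k$ boundary circles would force a compact subsurface of genus $g+\gamma+k-1>g$), whereas the paper argues entirely within its $c_1$-formalism, writing $M\cup N$ as a connected sum and invoking Proposition~\ref{prop:additive} together with $c_1(M)=c_1(X)$ to force $c_1(N)=0$ and $\ell=1$. Second, you establish $c_1(X)=2g$ only \emph{after} building $\overline X$, via the sandwich $2g=c_1(\mathrm{int}\,\Sigma)\le c_1(X)\le c_1(\overline X)=2g$; the paper instead proves $c_1(X)=C$ \emph{before} any construction, by choosing compactly supported representatives of a basis of $\overline{H}^{\,1}(X)$ inside some $M_n$ and checking (as in Lemma~\ref{lem:minusB}) that the induced map from $M_n$ is surjective. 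Your ordering is a bit more elementary and self-contained on the topological side; the paper's stays internal to the $c_1$-machinery developed in Section~\ref{sec:ck}. Your device in Step~2 of applying Koebe to a planar \emph{neighbourhood} $Q_j\supset\gamma_j$ (rather than just to the end $P_j$) is exactly what the paper does with its annulus $V_\gamma$, and makes the holomorphicity of the glued atlas transparent.
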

\begin{proof}
Take $M_0 \overset{\circ}\subset M_1 \overset{\circ}\subset M_2 \overset{\circ}\subset \ldots$ an exhaustion by compact submanifolds of $X$. Using Proposition \ref{prop:cnondec} we know that $c_1(M_i)$ is a non-decresing sequence of integers bounded by $c_1(X)$, so it is eventually constant equal to $C \leq c_1(X)$. Moreover, $c_1(M_i)/2$ is the genus of $M_i$, so $C/2$ is a way to obtain the genus of $X$ since its value does not depend on the exhaustion (recall that genus is monotone for compact surfaces with boundary).

\textbf{Claim:} $c_1(X) = C$.
\begin{proof}
Fix a base $\lbrace [\omega_1],\ldots,[\omega_{c_1(X)}] \rbrace$ of $\displaystyle\frac{\Zc{1}(X)}{\ov{\Bc{1}}(X)}$ with representatives $\lbrace \omega_1,\ldots,\omega_{c_1(X)} \rbrace \subset \Zc{1}(X)$. Then there is $n$ big enough such that $M_n$ contains the support of all the elements $\lbrace \omega_1,\ldots,\omega_{c_1(X)} \rbrace$. Then, as in Lemma \ref{lem:minusB} and Proposition \ref{prop:additive} and using that every component of $\partial M_n$ is $S^1$, we can show that $\displaystyle\frac{\Zc{1}(M_n)}{\ov{\Bc{1}}(M_n)} \hookrightarrow \displaystyle\frac{\Zc{1}(X)}{\ov{\Bc{1}}(X)}$ is surjective, which in turn tells us that $c_1(X) \leq C$, and this completes the proof of the claim since we already knew $C \leq c_1(X)$.
\end{proof}

Take now $M = M_n$ such that $c_1(M) = c_1(X)$.

\textbf{Claim:} For every component $N$ of $X\setminus M$, $c_1(N)=0$.
\begin{proof}
Recall that for a compact surface $\Sigma$ (with boundary) and a collection of $\ell$ components $\mathcal{C}$ of $\partial\Sigma$ we can write $\Sigma = \underline{\Sigma} \# \Sigma_{0,\ell}$ where the components of $\Sigma_{0,\ell}$ correspond to $C$. This can be done by cutting $\Sigma$ along a separating curve that has the elements of $\mathcal{C}$ at one side that is flat as well. Taking that $M$ and $M_{n+1} \cap N$ share $\ell>0$ boundary components, we can use the previous fact to write $M$ and $M_{n+1} \cap N$ as $\underline{M} \# \Sigma_{0,\ell}$, $\underline{M_{n+1}\cap N} \# \Sigma_{0,\ell}$. In particular $N = \underline{N} \# \Sigma_{0,\ell}$, where $\underline{N}$ is $\underline{M_{n+1} \cap N}$ glued with $(N\setminus M_{n+1})$. We can also then write $M\cup N $ as

\begin{equation}
M\cup N = \underline{M} \# \Sigma_{\ell-1, 0} \# \underline{N}
\end{equation}
where $\Sigma_{\ell-1,0}$ is the result of gluing the two copies of $\Sigma_{0,\ell}$ by their boundaries.

Since from Corollary \ref{cor:flat} $c_1(\Sigma_{0,\ell}) = 0$, Proposition \ref{prop:additive} tells us that $c_1(M) = c_1(\underline{M}), c_1(N) = c_1(\underline{N})$. Furthermone, Proposition \ref{prop:additive} also tells us that $c_1(M\cup N) = c_1(\underline{M}) + 2(\ell-1) + c_1(\underline{N})$. But since $c_1(M) = c_1(\underline{M})$ is equal to $c_1(\Sigma)$, Proposition \ref{prop:cnondec} gives us $c_1(M\cup N) = c_1(M)$. Then it follows that $c_1(N)=0$ and $\ell =1$.
\end{proof}

Now, for each of the finitely many components $N$ of $X\setminus M$ we can apply Theorem \ref{thm:Simha} to obtain a conformal embedding $N \xhookrightarrow{i_N} \ov{\mathbb{C}}$. Moreover, for each boundary component $\gamma$ of $M$ we can take an annulus neighbourhood $V_\gamma$ and glue $M\cup V_\gamma$ along $V_\gamma$ by $i_N$ to the component of $\ov{\mathbb{C}}\setminus i_N(\gamma)$ that contains $i_n(V_\gamma)$. If we do this for every component $N$ of $X\setminus M$ we will obtaind a Riemann surface $\Sigma$ because the gluing maps $i_N$ were conformal. Moreover, we have a natural conformal embedding $X\hookrightarrow \Sigma$ that is equal to the identity in $M\cup V_\gamma$ and equal to $i_N$ on each $N$. And because of the Jordan's curve theorem, the components of $\ov{\mathbb{C}}\setminus i_N(\gamma)$ are disks, so $\Sigma$ is compact. Finally, since $M$ is obtained from $\Sigma$ after removing some disks, then Lemma \ref{lem:minusB} tells us that $c_1(\Sigma) = c_1(M) = c_1(X)$, so then $c_1(X)=2g$ for $g$ the genus of $\Sigma$. In light of Proposition \ref{prop:cnondec} this is the smallest genus where $X$ can be embedded.

\end{proof}

Note that for the holomorphic embedding $X\hookrightarrow \Sigma$ the Riemann surface $\Sigma$ is not determined uniquely for a fixed open Riemann surface $X$. Indeed, if $\Sigma\setminus X$ has interior, we can fix any non-trivial holomorphic quadratic differential $\phi$ of $\Sigma$ and a Beltrami differential $\mu$ supported in $\Sigma\setminus X$ such that $\langle\phi, \mu \rangle \neq 0$. Then if $\Sigma_t$ is the Riemann surface obtained by solving the Beltrami equation associated to $t\mu$, then the tangent vector of ${\Sigma_t}$ at $\Sigma$ is given by $\mu$, which is a non-zero tangent vector of Teichm\"{u}ller space since it has non-zero pairing with $\phi$ who is an element of the cotangent space. Then $\Sigma_t$ is a non-constant path in Teichm\"{u}ller space, and since $t\mu$ vanishes in $X$, each of them admits inside a holomorphic copy of $X$.

On the other hand, if $X$ is of finite type then holomorphic embeddings are quite restrictive, as the following corollary concludes.

\begin{cor}
Let $X$ be biholomorphic to a compact Riemann surface $\Sigma$ with $n$ punctures and assume that there is a holomorphic embedding of $X$ to a (maybe open) Riemann surface $N$ with finite genus. Then $N$ is biholomorphic to $\Sigma$ with $n'$ punctures, where $n'\leq n$.
\end{cor}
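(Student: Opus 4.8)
The plan is to combine the genus-counting of Theorem \ref{thm:genus} with the special features of surfaces of finite type. Since $X$ is biholomorphic to $\Sigma$ with $n$ punctures, $X$ has genus $g$ equal to the genus of $\Sigma$, and each puncture is a once-punctured disk neighbourhood. First I would compute $c_1(X)$: because $\Sigma$ with $n$ punctures is obtained from the closed surface $\Sigma$ by removing $n$ points (equivalently $n$ closed disks, up to the quasiconformal/topological equivalence used throughout the paper), Lemma \ref{lem:minusB} applied $n$ times gives $c_1(X) = c_1(\Sigma) = b_1(\Sigma) = 2g$.

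Next, suppose $X \hookrightarrow N$ is a holomorphic embedding with $N$ of finite genus; I want to show $N$ is compact of genus $g$ with at most $n$ punctures. By Proposition \ref{prop:cnondec}, $c_1(N) \geq c_1(X) = 2g$; on the other hand, since $N$ has finite genus (hence, by the equivalence worked out in Theorem \ref{thm:genus}, $c_1(N)$ is finite and equal to twice the genus of $N$), we also need an upper bound $c_1(N) \le 2g$. This is where the finite-type hypothesis on $X$ enters: $N \setminus X$ is contained in the union of the closures of the $n$ punctured-disk ends of $X$ inside $N$. Each end of $X$ is an annulus (a punctured disk), and when we add whatever part of $N$ sits "beyond" that end, the resulting piece $N_j$ of $N$ is an open Riemann surface with a single boundary-type end attached to $M := X \setminus (\text{the } n \text{ ends})$, a compact genus-$g$ surface with $n$ boundary circles. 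Running the argument of the second claim in the proof of Theorem \ref{thm:genus} — writing $M$ and each $N_j$ with a flat $\Sigma_{0,1}$ summand split off, applying Proposition \ref{prop:additive} and Corollary \ref{cor:flat}, and comparing with $c_1(M \cup N_j) \le c_1(N)$ via Proposition \ref{prop:cnondec} — forces $c_1(N_j) = 0$ for each $j$, and hence $c_1(N) = c_1(M) = 2g$. Thus $N$ has genus exactly $g$.

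It then remains to identify $N$ up to punctures. By Theorem \ref{thm:genus} applied to $N$, there is a conformal embedding $N \hookrightarrow \widehat N$ into a compact Riemann surface of genus $g$; composing, $X \hookrightarrow \widehat N$ realizes $X = \Sigma \setminus \{p_1,\dots,p_n\}$ as an open subset whose complement $\widehat N \setminus X$ has genus-$0$ components (each $c_1$-trivial). Since $X$ has finite type, $\widehat N \setminus X$ has finitely many components, each of which — being a connected planar surface with compact closure attached to $X$ along a single end — must be contained in the closure of one of the $n$ punctured-disk ends of $X$; conversely each such end can "cap off" into $\widehat N$ either by a point or by a nontrivial planar region. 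Analyzing this locally: a neighbourhood of $p_j$ in $\widehat N$ is a simply connected (by planarity and the genus count) neighbourhood of the end, so by the Riemann mapping theorem / Theorem \ref{thm:Simha} it is conformally a disk, and $X$ near $p_j$ is that disk minus a connected compact set $E_j$. For $X$ and $N$ to be distinct as marked surfaces we would need some $E_j$ to be a single point (the puncture genuinely disappears in $N$) or a nondegenerate continuum (but then $N$ would not contain that part of $\widehat N$, i.e. that end of $N$ is still a puncture or a boundary). Since $N$ has finite genus and we have shown it embeds in a compact genus-$g$ surface, $N = \widehat N \setminus F$ for a finite set $F$ (the ends that cap to points), and each point of $F$ corresponds to a distinct $p_j$; hence $N \cong \Sigma$ with $n' = |F| \le n$ punctures.

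The main obstacle I expect is the last paragraph: controlling precisely how the ends of $X$ sit inside $N$ and then inside $\widehat N$, i.e.\ ruling out that an end of $X$ could open up into something more complicated than a once-punctured disk inside $N$ while keeping $N$ of finite genus. The finite-genus (equivalently finite-$c_1$) hypothesis, via Proposition \ref{prop:cnondec} and the connected-sum additivity, is what pins each end-component of $N \setminus M$ down to a planar (hence, together with the single-end condition, disk-like) piece; the careful bookkeeping of which boundary circles get flat $\Sigma_{0,\ell}$ summands split off, exactly as in the two claims inside the proof of Theorem \ref{thm:genus}, is the technical heart. Once that is in place, the count $n' \le n$ is immediate since capping a puncture by a point is the only genus-preserving planar way to extend, and one cannot create new ends out of nothing.
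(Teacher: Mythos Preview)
Your argument has a genuine gap and is far more complicated than necessary. The paper's proof is three lines: since $N$ has finite genus, Theorem \ref{thm:genus} gives a holomorphic embedding $N\hookrightarrow\Sigma'$ into a compact Riemann surface; composing gives $X\hookrightarrow\Sigma'$; and then a \emph{classical removable-singularities argument} extends this to a biholomorphism $\Sigma\to\Sigma'$ (a holomorphic map from a punctured disk into a compact Riemann surface extends across the puncture, and the extension $\Sigma\to\Sigma'$ is injective on a cofinite set, hence a degree-one cover). Once $\Sigma\cong\Sigma'$, the chain $X\subset N\subset\Sigma'$ with $\Sigma'\setminus X$ consisting of exactly $n$ points forces $\Sigma'\setminus N$ to be at most $n$ points, and you are done.

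You never invoke this extension step, and your attempted substitute does not work. First, the argument for $c_1(N)\le 2g$ is circular: you run the second claim of Theorem \ref{thm:genus} to force $c_1(N_j)=0$, but that claim uses $c_1(M)=c_1(\text{ambient})$ as an input, which here means $c_1(M)=c_1(N)$, i.e.\ precisely the inequality you are trying to establish. Second, the assertion that ``$N\setminus X$ is contained in the union of the closures of the $n$ punctured-disk ends of $X$ inside $N$'' is unjustified; nothing so far prevents a single end of $X$ from opening into a region of $N$ of positive genus. Third, and most seriously, the sentence ``$N=\widehat N\setminus F$ for a finite set $F$'' is asserted without proof: an open Riemann surface of finite genus embedded in a compact one can have complement a Cantor set, a collection of slits, etc. What rules this out here is exactly that $X$ is already cofinite in some compact surface and the embedding $X\hookrightarrow\widehat N$ \emph{extends across the punctures}; that is the missing idea, and once you supply it the rest of your topological bookkeeping becomes unnecessary.
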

\begin{proof}
Since $N$ has finite genus, there is a holomorphic embedding $N\hookrightarrow\Sigma'$ where $\Sigma'$ is a compact Riemann surface. Then $X$ embeds into $\Sigma'$ by composing the maps. By a classical result, this embedding extends to a biholomorphic map $\Sigma \rightarrow \Sigma'$. And since $N$ contains the image of $X$, we see that $N$ is also of finite type with no more than $n$ punctures.
\end{proof}

\begin{cor}
Let $X$ be a Riemann surface homeomorphic to an open set of a compact surface. Then $X$ can be holomorphically embedded into a compact Riemann surface.
\end{cor}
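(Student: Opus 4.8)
The plan is to reduce the statement directly to Theorem \ref{thm:genus} by showing that the topological hypothesis forces $c_1(X)$ to be finite. Write $X$ as being homeomorphic to an open subset $U$ of a compact surface $S$. Since $X$ carries a Riemann surface structure it is in particular a smooth $2$-manifold, and $S$ may be equipped with a smooth structure (every topological surface admits one), so that $U \subset S$ is an open submanifold; thus all the constructions of Section \ref{sec:ck} apply verbatim.

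First I would invoke Proposition \ref{prop:cinv}: since $X$ and $U$ are homeomorphic, $c_1(X) = c_1(U)$. Next, Proposition \ref{prop:cnondec} applied to the open set $U \subset S$ gives $c_1(U) \leq c_1(S)$. Finally, since $S$ is compact, Lemma \ref{lem:ckvsbk} yields $c_1(S) = b_1(S)$, which is finite. Chaining these inequalities gives $c_1(X) = c_1(U) \leq c_1(S) = b_1(S) < \infty$.

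Having established that $c_1(X)$ is finite, Theorem \ref{thm:genus} immediately provides a conformal (hence holomorphic) embedding of $X$ into a compact Riemann surface, of genus $g = c_1(X)/2$, which is exactly what is claimed. One could add, as a refinement, that by the monotonicity in Proposition \ref{prop:cnondec} the genus of this target surface is at most the genus of $S$.

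Since the argument is a short chain of facts already proved in the excerpt, there is no real obstacle to overcome. The only point requiring a word of care is the passage from ``topological open subset of a compact surface'' to a genuine smooth open submanifold so that Propositions \ref{prop:cinv}, \ref{prop:cnondec} and Lemma \ref{lem:ckvsbk} are literally applicable; this is handled by the classical existence (and essential uniqueness) of smooth structures on surfaces, after which the homeomorphism between $X$ and $U$ suffices because $c_1$ was shown to be a purely topological invariant.
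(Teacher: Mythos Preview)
Your proof is correct and follows essentially the same route as the paper: use the topological invariance of $c_1$ together with monotonicity and the compact bound to conclude $c_1(X)<\infty$, then invoke Theorem~\ref{thm:genus}. Your write-up is in fact more careful than the paper's one-line proof, which cites only Lemma~\ref{lem:ckvsbk} and Proposition~\ref{prop:cinv} and leaves the use of Proposition~\ref{prop:cnondec} implicit.
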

\begin{proof}
Because of Lemma \ref{lem:ckvsbk} and Proposition \ref{prop:cinv} we have that $c_1(X)$ is finite, so the claim follows from Theorem \ref{thm:genus}
\end{proof}

\begin{cor}\label{cor:resisom}
Let $X$ be an open Riemann surface of finite genus and $X\hookrightarrow\Sigma$ a holomorphic embedding of $X$ into a compact Riemann surface $\Sigma$ of the same genus. Then the restriction $r:Z^1(\Sigma)\to\,\overline{\!Z}^{\,1}\!(X)$ is well defined and induces an isomorphism $r:H^1(\Sigma)\to\,\overline{\!H}^{\,1}\!(X)$.
\end{cor}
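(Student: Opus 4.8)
The plan is to exploit that both sides have the same finite dimension and thereby reduce the statement to showing that $r$ is a well-defined injection. Since $\Sigma$ is compact, $\dim_{\mathbb R}\HR{1}(\Sigma)=b_1(\Sigma)=2g$, while Theorem \ref{thm:genus} gives $\dim_{\mathbb R}\OH{1}(X)=c_1(X)=2g$ because the embedding has the same genus $g$. Thus any injective linear map $\HR{1}(\Sigma)\to\OH{1}(X)$ is automatically an isomorphism, and it only remains to produce such a map out of the restriction $r$.

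First I would check that $r$ lands in the right space, i.e. that $\omega|_X\in\OZ{1}(X)$ for every $\omega\in\Z{1}(\Sigma)$. Closedness is immediate from Diagram \ref{diag:restriction}, so the content is exactness outside a compact set. Following the proof of Theorem \ref{thm:genus}, I would choose a compact $M\subset X$ with $c_1(M)=c_1(X)$, so that $X\setminus M$ is a finite disjoint union of planar ends $N_j$ (each with $c_1(N_j)=0$), attached to $M$ along a single circle. Because the embedding has the same genus, the complement $\Sigma\setminus X$ carries no genus, and every loop in $N_j$ either bounds inside $N_j$ or encircles a planar piece of $\Sigma\setminus X$; in either case it is null-homologous in $\Sigma$. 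Consequently every period $\int_c(\omega|_{N_j})=\langle[\omega],[c]\rangle_\Sigma$ vanishes, and since on an open subset of the sphere a closed form with vanishing periods is exact, $\omega|_X$ is exact on $X\setminus M$. Hence $r(\omega)\in\OZ{1}(X)$, and as $(df)|_X=d(f|_X)$ sends $\B{1}(\Sigma)$ into $\B{1}(X)$, the map $r$ descends to $r:\HR{1}(\Sigma)\to\OH{1}(X)$.

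For injectivity, suppose $\omega|_X=df$ on $X$. Then $\int_c\omega=0$ for every loop $c\subset X$; and since the same-genus hypothesis makes $H_1(X;\mathbb R)\to H_1(\Sigma;\mathbb R)$ surjective (the handle generators already lie in $M$), all periods of $\omega$ over $\Sigma$ vanish, so $[\omega]=0$ in $\HR{1}(\Sigma)$. Combined with the dimension count this yields the isomorphism. I expect the main obstacle to be the well-definedness step: rigorously justifying that loops in the planar ends $N_j$ are null-homologous in $\Sigma$ (equivalently, that their periods vanish), which is exactly where the equality of genera is indispensable, and where one must be careful that the ends, although possibly topologically complicated, acquire trivial $\Sigma$-homology once the complement is filled in.
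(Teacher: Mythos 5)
Your overall strategy --- check well-definedness, then one of injectivity/surjectivity, then close with the dimension count $\dim_{\mathbb{R}}\HR{1}(\Sigma)=2g=c_1(X)=\dim_{\mathbb{R}}\OH{1}(X)$ --- is sound, but the mechanism is genuinely different from the paper's. The paper never leaves the world of forms: for well-definedness it observes that $c_1(X)=c_1(\Sigma)$ forces the injection $\frac{\Zc{1}(X)}{e^{-1}(\ov{\Bc{1}}(\Sigma))}\hookrightarrow\HR{1}(\Sigma)$ of Diagram~(\ref{diag:nondec}) to be bijective, so any $\omega\in\Z{1}(\Sigma)$ equals $d\alpha+e(\omega_0)$ with $\omega_0\in\Zc{1}(X)$, and restriction to $X$ exhibits $r(\omega)$ as exact outside the compact support of $\omega_0$; and instead of your injectivity it proves surjectivity, which is immediate from Lemma \ref{lem:incisom}: every class of $\OH{1}(X)$ has a compactly supported representative $\omega_0$, and $r[e(\omega_0)]=[\omega_0]$. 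You argue instead through homology and periods. That route can be made to work, and it has the merit of exposing the geometric reason behind the corollary (all of $H_1(\Sigma;\mathbb{R})$ is carried by the compact core $M\subset X$) and of giving injectivity directly rather than via dimensions; the paper's route is shorter because it recycles Diagram~(\ref{diag:nondec}) and Lemma \ref{lem:incisom} verbatim and needs no homology, Poincar\'e duality, or structure theory of the ends.

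However, as written your key homological claim is asserted rather than proved, and you rightly flag it as the weak point. ``The complement $\Sigma\setminus X$ carries no genus'' and ``every loop in $N_j$ either bounds inside $N_j$ or encircles a planar piece of $\Sigma\setminus X$'' are not precise statements: $\Sigma\setminus X$ is merely a compact subset of $\Sigma$ (possibly a point, a Cantor set, or worse --- not a subsurface), so neither ``its genus'' nor ``encircling it'' has an a priori meaning, and the proposed dichotomy for loops in $N_j$ is unjustified. The clean repair is by intersection theory, and it fixes both places where you need homological input at once. Since $M$ can be taken to be a compact genus-$g$ subsurface of $X$ (proof of Theorem \ref{thm:genus}), its $2g$ handle curves define classes in $H_1(\Sigma;\mathbb{R})$ whose mutual intersection numbers form the standard symplectic matrix; nondegeneracy makes these classes linearly independent, hence a basis of the $2g$-dimensional space $H_1(\Sigma;\mathbb{R})$. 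Consequently: (i) any loop $c\subset X\setminus M$ is disjoint from all these curves, so $[c]$ has zero intersection number with a basis and therefore $[c]=0$ in $H_1(\Sigma;\mathbb{R})$ by nondegeneracy of the intersection pairing, which gives the vanishing of periods you need (note that planarity of the ends plays no role here: on any manifold a closed $1$-form with vanishing periods is exact); and (ii) $H_1(M;\mathbb{R})\to H_1(\Sigma;\mathbb{R})$ is surjective, which is exactly what your injectivity step invokes. With these substitutions your proof is complete.
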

\begin{proof}
Because $c_1(X)=c_1(\Sigma)$, the homomorphism $\dfrac{Z^1_c(X)}{e^{-1}(\,\overline{\!B}^{\,1}_c\!(X))}\xhookrightarrow{\ e\ }H^1(M)$ of diagram (\ref{diag:nondec}) is biyective. That means in particular that any $\omega\in Z^1(\Sigma)$ can be expressed as $\omega=d\alpha+e(\omega_0)$, where $\alpha$ is a smooth function on $\Sigma$ and $\omega_0\in Z^1_c(X)$. Hence $\omega=d\alpha+\omega_0$ in $X$, so the restriction of $\omega$ to $X$ is exact at infinity. Thus the restriction $r:Z^1(\Sigma)\to\,\overline{\!Z}^{\,1}\!(X)$ is well defined and induces a homomorphism $r:H^1(\Sigma)\to\,\overline{\!H}^{\,1}\!(X)$. Next, since $H^1(\Sigma)$ and $\,\overline{\!H}^{\,1}\!(X)$ have the same dimension, it is enough to prove that $r$ is surjective in order to be an isomorphism. That is clear from lemma \ref{lem:incisom} because any $\omega\in\,\overline{\!Z}^{\,1}\!(X)$ can be expressed as $\omega=d\alpha+\omega_0$ where $\alpha$ is a smooth function on $X$ and $\omega_0\in Z^1_c(X)$, so $r[e(\omega_0)]=[\omega]$.
\end{proof}

Another interesting application is an analogous of Hodge Theorem for open Riemann surfaces of finite genus:

\begin{cor}
If $X$ is an open Riemann surface of finite genus, then every class in $\,\overline{\!H}^{\,1}\!(X)=\,\overline{\!Z}^{\,1}\!(X)/B^1(X)$ has a harmonic representative (which is exact at infinity). Moreover, given a holomorphic embedding of $X$ into a compact Riemann surface $\Sigma$ of the same genus, each class in $\,\overline{\!H}^{\,1}\!(X)$ has a unique harmonic representative obtained as the restriction of a harmonic 1-form on $\Sigma$.
\end{cor}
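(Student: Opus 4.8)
The plan is to leverage Corollary \ref{cor:resisom} together with the classical Hodge theorem on the compact Riemann surface $\Sigma$. First I would fix a holomorphic embedding $X\hookrightarrow\Sigma$ into a compact Riemann surface of the same genus $g=c_1(X)/2$, which exists by Theorem \ref{thm:genus}. On $\Sigma$ the classical Hodge theorem gives, for each de Rham class, a unique harmonic representative; since $\dim_{\mathbb{R}} H^1(\Sigma)=b_1(\Sigma)=2g$ and the space of harmonic $1$-forms on $\Sigma$ has the same dimension, every element of $H^1(\Sigma)$ is represented by a unique harmonic $1$-form. By Corollary \ref{cor:resisom} the restriction map $r\colon H^1(\Sigma)\to\OH{1}(X)$ is an isomorphism, so given a class $[\omega]\in\OH{1}(X)$ there is a unique $[\widetilde\omega]\in H^1(\Sigma)$ with $r[\widetilde\omega]=[\omega]$, and then a unique harmonic representative $h$ of $[\widetilde\omega]$ on $\Sigma$.

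The next step is to check that $h|_X$ is an admissible representative of $[\omega]$, i.e. that $h|_X\in\OZ{1}(X)$ and $[h|_X]=[\omega]$ in $\OZ{1}(X)/\B{1}(X)$. The first point is exactly the content of Corollary \ref{cor:resisom}: the restriction of any closed $1$-form on $\Sigma$ is exact at infinity on $X$, so $r$ is well defined at the level of these quotient spaces. The second point is automatic since $h$ represents $[\widetilde\omega]=r^{-1}[\omega]$, hence $r[h]=[\omega]$. Being the restriction of a harmonic form, $h|_X$ is harmonic on $X$ (harmonicity is a local condition, preserved under restriction to the open subset $X$), which establishes the existence of a harmonic representative that is exact at infinity.

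For the uniqueness statement \emph{relative to the embedding}, I would argue as follows: if $h_1,h_2$ are two harmonic $1$-forms on $\Sigma$ whose restrictions to $X$ lie in the same class of $\OH{1}(X)$, then $r[h_1]=r[h_2]$ in $\OH{1}(X)$, and since $r\colon H^1(\Sigma)\to\OH{1}(X)$ is injective, $[h_1]=[h_2]$ in $H^1(\Sigma)$; by the uniqueness part of the classical Hodge theorem on the compact surface $\Sigma$, $h_1=h_2$. This gives that each class in $\OH{1}(X)$ has a unique harmonic representative \emph{of the form} $\left(\text{harmonic on }\Sigma\right)|_X$.

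The main subtlety I anticipate is the phrase ``unique harmonic representative'' in the first sentence of the corollary: without reference to an embedding, harmonicity on $X$ alone need not pin down the representative, because on a noncompact surface there can be many harmonic $1$-forms differing by a harmonic exact form $df$ with $f$ a nonconstant harmonic function (such $df$ is trivial in $\OH{1}(X)$). So I would read the first assertion as an existence statement only — every class \emph{has} a harmonic representative that is exact at infinity — and reserve uniqueness for the second assertion, where the embedding into $\Sigma$ singles out the canonical one. Making this reading explicit, and verifying that restriction genuinely preserves harmonicity and the ``exact at infinity'' condition (both local/essentially formal, the latter handled by Corollary \ref{cor:resisom}), is the only place where care is needed; the rest is a direct transport of the compact Hodge theorem through the isomorphism $r$.
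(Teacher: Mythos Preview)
Your proposal is correct and follows essentially the same argument as the paper: fix an embedding into $\Sigma$, compose the Hodge isomorphism $\mathcal H^1(\Sigma)\cong H^1(\Sigma)$ with the restriction isomorphism $r:H^1(\Sigma)\to\OH{1}(X)$ of Corollary \ref{cor:resisom}, and use that restriction preserves harmonicity (the paper phrases this as conformal invariance of harmonic $1$-forms). Your explicit remark that the first sentence should be read as existence only, with uniqueness reserved for representatives restricted from $\Sigma$, is a useful clarification that the paper leaves implicit.
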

\begin{proof}
Fix a holomorphic embedding of $X$ into a compact Riemann surface $\Sigma$ of the same genus and call by $\mathcal H^1(\Sigma)$ the space of harmonic 1-forms on $\Sigma$. Since the restriction $r:\mathcal H^1(\Sigma)\to\,\overline{\!H}^{\,1}\!(X)$ is the composition of the inclusion $i:\mathcal H^1(\Sigma)\to H^1(\Sigma)$ with the restriction $r:H^1(\Sigma)\to\,\overline{\!H}^{\,1}\!(X)$, and both maps are isomorphisms because of Hodge theorem and corollary \ref{cor:resisom} respectively, we conclude $r:\mathcal H^1(\Sigma)\to\,\overline{\!H}^{\,1}\!(X)$ is an isomorphism. Finally, because of the conformal invariance of harmonic 1-forms, we have $r(\mathcal H^1(\Sigma))\subset\mathcal H^1(X)$ and the result immediately follows.
\end{proof}


\bibliographystyle{amsalpha}
\bibliography{bib}
\end{document}